\documentclass{amsart}
\usepackage{amsmath}
\usepackage{amsfonts}

\setcounter{MaxMatrixCols}{10}

\newtheorem{theorem}{Theorem}
\theoremstyle{plain}

\newtheorem{definition}{Definition}
\newtheorem{example}{Example}

\newtheorem{lemma}{Lemma}

\newtheorem{remark}{Remark}

\numberwithin{equation}{section}

\begin{document}
\title[Relation between some types of convex functions]{On relations between
some types of convex functions }
\author{Shoshana Abramovich}
\address{Department of Mathematics, University of Haifa, Haifa, ISRAEL}
\email{abramos@math.haifa.ac.il}
\date{August 12, 2024}
\subjclass{26A15, }
\keywords{$\phi $-convexity, Superquadracity, Strong-convexity, Uniform
convexity, Error function }

\begin{abstract}
In this paper we show how the superquadratic functions can be used as a tool
for researching other types of convex functions like $\phi $-convexity,
strong-convexity and uniform convexity. We show how to use inequalities
satisfied by superquadratic functions and how to adapt the technique used to
get them in order to obtain new results satisfied by uniformly convex
functions and to $\phi $-convex functions.

Also, we show examples that emphasize relations between superquadracity and
some other types of convex functions
\end{abstract}

\maketitle

\section{\textbf{Introduction}}

In this paper we show how the superquadratic functions can be used as a tool
for researching other types of convex functions like $\phi $-convexity,
strong-convexity and uniform convexity. We show how to use inequalities
satisfied by superquadratic functions and how to adapt the technique used to
get them in order to obtain new results satisfied by uniformly convex
functions and to $\phi $-convex functions.

Also, we show examples that emphasize relations between superquadracity and
some other types of convex functions

We start with definitions of these classes of functions:

\begin{definition}
\label{Def1} \cite[Definition 1]{A2} A function $\varphi :\left[ 0,\infty
\right) \rightarrow 
\mathbb{R}
$ is superquadratic provided that for all $x\geq 0$\ there exists a constant 
$C_{x}\in $\ $%
\mathbb{R}
$ such that%
\begin{equation}
\varphi \left( y\right) \geq \varphi \left( x\right) +C_{x}\left( y-x\right)
+\varphi \left( \left\vert y-x\right\vert \right)  \label{1.1}
\end{equation}%
for all $y\geq 0$.\ If the reverse of (\ref{1.1}) holds then $\varphi $\ is
called subquadratic.
\end{definition}

\begin{lemma}
\label{Lem1} \cite[Lemma1]{A2} Let the function $\varphi $\bigskip\ be
superquadratic with $C_{x}$ as in Definition \ref{Def1}.

(i)\qquad Then $\bigskip \varphi \left( 0\right) \leq 0$

(ii)\qquad If $\varphi \left( 0\right) =\varphi ^{\prime }\left( 0\right) =0$%
, then $C_{x}=\varphi ^{\prime }\left( x\right) $ whenever~$\varphi $ is
differentiable at $x>0\bigskip .\ $

(iii)\qquad If $\varphi \geq 0,$ then $\varphi $\ is convex and \ $\varphi
\left( 0\right) =\varphi ^{\prime }\left( 0\right) =0.$
\end{lemma}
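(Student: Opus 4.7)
\bigskip

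\noindent\textbf{Proof proposal.}

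For part (i), the natural move is to exploit the one degenerate substitution available in (\ref{1.1}): set $y=x$. The linear term vanishes, $\varphi\left(x\right)$ cancels on both sides, and the residual inequality is $0\geq\varphi\left(0\right)$. This is immediate.

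For part (ii), I would use the hypothesis $\varphi\left(0\right)=\varphi^{\prime}\left(0\right)=0$ to squeeze $C_{x}$ between the one-sided Newton quotients of $\varphi$ at $x>0$. Writing $y=x+h$ with $h>0$ small in (\ref{1.1}) gives
\begin{equation*}
\frac{\varphi\left(x+h\right)-\varphi\left(x\right)}{h}\geq C_{x}+\frac{\varphi\left(h\right)}{h},
\end{equation*}
so letting $h\to0^{+}$ and using $\varphi\left(h\right)/h\to\varphi^{\prime}\left(0\right)=0$ yields $\varphi^{\prime}\left(x\right)\geq C_{x}$. The symmetric choice $y=x-h$ with $0<h<x$ gives the reverse inequality after dividing by $-h$. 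Equality then follows from differentiability at $x$.

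For part (iii), the claim has three pieces that I would prove in this order. First, $\varphi\left(0\right)=0$ is forced by combining (i) with $\varphi\geq 0$. Next, for convexity, I would fix $w=\lambda x+\left(1-\lambda\right)z$ and apply (\ref{1.1}) at the base point $w$ twice, once with $y=x$ and once with $y=z$. Taking the convex combination $\lambda\cdot$(first)$+\left(1-\lambda\right)\cdot$(second) makes the $C_{w}$ term collapse because $\lambda\left(x-w\right)+\left(1-\lambda\right)\left(z-w\right)=0$, while the two trailing $\varphi\left(\left\vert \cdot\right\vert \right)$ terms are nonnegative by hypothesis and can be discarded. What remains is exactly Jensen's inequality for $\varphi$ at $w$. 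Finally, to get $\varphi^{\prime}\left(0\right)=0$, I would reuse the $y=x\pm h$ substitutions from part (ii), but now without pre-assuming $\varphi^{\prime}\left(0\right)=0$: passing to one-sided limits produces
\begin{equation*}
\varphi^{\prime}\left(x+\right)\geq C_{x}+\varphi^{\prime}\left(0+\right)\qquad\text{and}\qquad\varphi^{\prime}\left(x-\right)\leq C_{x}-\varphi^{\prime}\left(0+\right),
\end{equation*}
so $\varphi^{\prime}\left(x+\right)-\varphi^{\prime}\left(x-\right)\geq2\varphi^{\prime}\left(0+\right)$. At any point $x$ where the convex function $\varphi$ is differentiable (a set of full measure), the left side vanishes, forcing $\varphi^{\prime}\left(0+\right)\leq0$. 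Since $\varphi\geq0=\varphi\left(0\right)$ forces $\varphi^{\prime}\left(0+\right)\geq0$, equality follows.

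The step I expect to be the main obstacle is the last one: $\varphi^{\prime}\left(0\right)=0$ in part (iii) cannot be read off directly from (\ref{1.1}), and requires first establishing convexity so that a.e.\ differentiability is available to kill the jump $\varphi^{\prime}\left(x+\right)-\varphi^{\prime}\left(x-\right)$. The other pieces are essentially substitution exercises in (\ref{1.1}).
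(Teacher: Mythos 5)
Your argument is correct in all three parts, but note that this paper does not actually prove Lemma \ref{Lem1}: it is quoted from \cite{A2} (ultimately going back to the Abramovich--Jameson--Sinnamon paper \cite{AJS}), so there is no in-paper proof to compare against line by line. Parts (i) and (ii) coincide with the standard argument: the substitution $y=x$ gives $\varphi(0)\le 0$, and the two one-sided quotients with $y=x\pm h$, together with $\varphi(h)/h\to\varphi'(0)=0$, pin $C_x$ between $\varphi'(x-)$ and $\varphi'(x+)$. In part (iii) your convexity step (dropping the nonnegative term $\varphi(|y-x|)$ to get a support line at every point, equivalently your two-point convex-combination computation) is the classical one. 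Where you genuinely depart from the literature is the final step $\varphi'(0+)=0$: the usual proof iterates the inequality obtained from $y=0$ and $y=2x$ (which gives $C_x x\ge 2\varphi(x)$ and hence $\varphi(2x)\ge 4\varphi(x)$, so a positive right derivative at $0$ would force $\varphi(x)\ge c\,2^{n}x$ for all $n$, a contradiction), whereas you use a.e.\ differentiability of the convex function $\varphi$ to kill the jump in $\varphi'(x+)-\varphi'(x-)\ge 2\varphi'(0+)$. Both routes are sound; yours trades the elementary doubling iteration for the (standard but slightly heavier) fact that a convex function is differentiable off a countable set, and your derivation correctly establishes convexity first so that the one-sided derivatives and $\varphi'(0+)=\lim_{h\to 0+}\varphi(h)/h\in[0,\infty)$ exist before they are used.
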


\begin{theorem}
\bigskip\ \label{Th1} \cite{AJS} The inequality 
\begin{equation}
\varphi \left( \int fd\mu \right) \leq \int \left( \varphi \left( f\left(
s\right) \right) -\varphi \left( \left\vert f\left( s\right) -\int fd\mu
\right\vert \right) \right) d\mu \left( s\right)  \label{1.2}
\end{equation}%
holds for all probability measures $\mu $ and all non-negative, $\mu $%
-integrable functions $f$ if and only if $\varphi $ is superquadratic.The
discrete version of this inequality is%
\begin{equation*}
\varphi \left( \sum_{r=1}^{n}\lambda _{r}x_{r}\right) \leq
\sum_{r=1}^{n}\lambda _{r}\left( \varphi \left( x_{r}\right) -\varphi \left(
\left\vert x_{r}-\sum_{j=1}^{n}\lambda _{j}x_{j}\right\vert \right) \right)
\end{equation*}%
for $x_{r}\geq 0,$ $\lambda _{r}\geq 0,$ \ $r=1,...,n$ and $%
\sum_{r=1}^{n}\lambda _{r}=1.$
\end{theorem}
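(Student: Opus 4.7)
The proof splits into an easy direction ($\varphi$ superquadratic $\Rightarrow$ (\ref{1.2})) and a more delicate converse. For the forward direction, set $\bar f := \int f\, d\mu$ and apply the defining inequality (\ref{1.1}) pointwise in $s$ with $x := \bar f$ and $y := f(s)$:
\[
\varphi(f(s)) \geq \varphi(\bar f) + C_{\bar f}\bigl(f(s) - \bar f\bigr) + \varphi\bigl(|f(s) - \bar f|\bigr).
\]
Integration against $\mu$ annihilates the affine middle term since $\int (f(s) - \bar f)\, d\mu(s) = 0$, and a rearrangement yields (\ref{1.2}) immediately.

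For the converse, assume (\ref{1.2}) holds universally. Fix $x \geq 0$; the task is to construct a real $C_x$ such that $\varphi(y) \geq \varphi(x) + C_x(y-x) + \varphi(|y-x|)$ holds for every $y \geq 0$. I would test (\ref{1.2}) against two-point probability measures with barycenter $x$. Given $y \geq 0$ with $y \neq x$, and $\lambda \in (0,1)$ small enough that $z := (x - \lambda y)/(1-\lambda) \geq 0$, the measure placing mass $\lambda$ at $y$ and mass $1-\lambda$ at $z$ has mean $x$ and satisfies $|z - x| = \tfrac{\lambda}{1-\lambda}|y-x|$. Substituting into (\ref{1.2}) and using the identity $\tfrac{1-\lambda}{\lambda}(x-z) = y-x$ yields
\[
\varphi(y) - \varphi(x) - \varphi(|y-x|) \geq \frac{\varphi(x) - \varphi(z)}{x-z}\,(y-x) + \frac{1-\lambda}{\lambda}\,\varphi\!\left(\tfrac{\lambda}{1-\lambda}|y-x|\right).
\]
Sending $\lambda \to 0^+$ drives $z \to x$ from the side opposite to $y$, so the first term on the right approaches $(y-x)$ times a one-sided slope of $\varphi$ at $x$, which is the natural candidate for $C_x$.

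The principal obstacle is two-fold. First, one must ensure that this slope can be chosen independently of $y$, so that a single $C_x$ supports $\varphi$ from both sides of $x$; this is arranged by the standard monotonicity of difference quotients combined with $\varphi(0) \leq 0$, a fact obtained directly from (\ref{1.2}) by taking $f \equiv 0$ (giving $\varphi(0) \leq 0$ without any prior appeal to superquadracity). Second, the residual term $\tfrac{1-\lambda}{\lambda}\varphi(\tfrac{\lambda}{1-\lambda}|y-x|)$ must not ruin the limit, which again reduces to controlling $\varphi$ near $0$. Once $C_x$ has been extracted, inequality (\ref{1.1}) is established and $\varphi$ is superquadratic. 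The discrete version of the inequality then follows from the general one by specialising $\mu$ to the discrete probability measure $\sum_{r=1}^{n}\lambda_r \delta_{x_r}$.
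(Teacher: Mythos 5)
Your forward direction is fine: it is the standard argument (apply (\ref{1.1}) pointwise with $x=\int f\,d\mu$, $y=f(s)$, integrate, and use that the affine term has zero mean), and the discrete version does follow by specializing $\mu$ to $\sum_r \lambda_r\delta_{x_r}$. Note, however, that the paper itself offers no proof of Theorem \ref{Th1} --- it is quoted from \cite{AJS} --- so the converse has to stand on its own, and there it has a genuine gap.

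The two ``obstacles'' you name in the converse are not technicalities that can be discharged as you suggest; they are exactly where the proof is missing. First, ``standard monotonicity of difference quotients'' is a property of convex functions, and nothing in the hypothesis (\ref{1.2}) tells you $\varphi$ is convex: superquadratic functions need not be convex (by Lemma \ref{Lem1} convexity only follows when $\varphi\geq 0$), so neither the existence of the one-sided limits of $\frac{\varphi(x)-\varphi(z)}{x-z}$ nor the possibility of choosing one constant $C_x$ serving both sides of $x$ is justified. Second, and more seriously, the residual term $\frac{1-\lambda}{\lambda}\varphi\bigl(\tfrac{\lambda}{1-\lambda}|y-x|\bigr)=|y-x|\,\frac{\varphi(u)}{u}$ with $u\to 0^{+}$ cannot in general be ``controlled near $0$'': the hypothesis only yields $\varphi(0)\leq 0$, and if $\varphi(0)<0$ the term tends to $-\infty$. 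For instance $\varphi\equiv -1$ satisfies (\ref{1.2}) (the right-hand side is $0$) and is superquadratic with $C_x=0$, yet in your limiting inequality the right-hand side diverges to $-\infty$, so no candidate $C_x$ is extracted; the same degeneration occurs for any admissible $\varphi$ with $\varphi(0)<0$. There is also the untreated boundary case $x=0$, where your two-point measure with barycenter $x$ does not exist for $y>0$ (though there $C_0=0$ works directly from $\varphi(0)\leq 0$). As it stands, then, the converse is a plausible plan that works only under extra regularity (e.g.\ $\varphi(0)=0$ and finite one-sided slopes), not a proof of the stated equivalence; the argument in \cite{AJS} extracts $C_x$ by a more careful route rather than by this single limit.
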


\begin{remark}
\label{Rem1} The functions $\varphi \left( x\right) =x^{p},$ $x\geq 0$, are
superquadratic for $p\geq 2$ and subquadratic when $0\leq p\leq 2$.
\end{remark}

\begin{lemma}
\label{Lem2} \cite[Lemma 4.1]{AJS} A non-positive, non-increasing,
superadditive function is superquadratic.
\end{lemma}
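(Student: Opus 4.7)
The plan is to verify the defining inequality \eqref{1.1} with the very simple choice $C_x = 0$ for every $x \geq 0$. Once this is guessed, the verification splits naturally into the two cases $y \geq x$ and $y < x$, and each case uses only one of the three hypotheses (superadditivity, monotonicity, sign).

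For the case $y \geq x$, we have $|y-x| = y - x$, so the claim reduces to
\[
\varphi(y) \geq \varphi(x) + \varphi(y-x).
\]
Writing $y = x + (y-x)$ with both summands in $[0,\infty)$, this is immediate from superadditivity. Notice that non-positivity and monotonicity play no role here.

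For the case $y < x$, we have $|y-x| = x - y \geq 0$, and the claim becomes
\[
\varphi(y) \geq \varphi(x) + \varphi(x-y).
\]
Here superadditivity goes the wrong way, but non-positivity gives $\varphi(x-y) \leq 0$, so $\varphi(x) + \varphi(x-y) \leq \varphi(x)$, and then monotonicity (non-increasing) together with $y \leq x$ gives $\varphi(x) \leq \varphi(y)$. Chaining these yields the desired inequality.

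The only conceptual step is to recognize that the constant $C_x = 0$ works; there is no real analytic obstacle, since the three hypotheses are cleanly partitioned between the two cases. I would present the proof in essentially the two displays above, preceded by the sentence ``Take $C_x = 0$ in Definition~\ref{Def1}.''
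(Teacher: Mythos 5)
Your proof is correct and is essentially the argument behind the cited result: the paper does not reprove Lemma~\ref{Lem2} but refers to \cite[Lemma 4.1]{AJS}, whose proof likewise takes $C_x=0$, uses superadditivity when $y\geq x$, and uses non-positivity together with monotonicity when $y<x$. Nothing is missing.
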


\begin{definition}
\label{Def2} \cite{A} Let $I=\left[ a,b\right] \subset 
\mathbb{R}
$ be an interval and $\Phi :\left[ 0,b-a\right] \rightarrow 
\mathbb{R}
$ be a function. A function $f:\left[ a,b\right] \rightarrow 
\mathbb{R}
$ is said\ to be \textbf{generalized }$\Phi $\textbf{-uniformly convex}\ if:%
\begin{eqnarray*}
tf\left( x\right) +\left( 1-t\right) f\left( y\right) &\geq &f\left(
tx+\left( 1-t\right) y\right) +t\left( 1-t\right) \Phi \left( \left\vert
x-y\right\vert \right) \\
\text{for \ }x,y &\in &I\text{ \ and }t\in \left[ 0,1\right] \text{.}
\end{eqnarray*}%
If in addition $\Phi \geq 0$, then $f$\ is said to be $\Phi $\textbf{%
-uniformly convex}\textit{, }or\textit{\ }\textbf{uniformly convex with
modulus }$\Phi $.
\end{definition}

\begin{remark}
\label{Rem2} The functions $f\left( x\right) =x^{n},$ $x\geq 0,$ $n=2,3...$,
are uniformly convex on $x\geq 0$ with a modulus $\Phi \left( x\right)
=x^{n},$ $x>0$ (see \cite{A1}), (these functions are also superquadratic).
\end{remark}

\begin{remark}
\label{Rem3} It is proved in \cite{Z} that when $f$ is uniformly convex,%
\textbf{\ }there is always a modulus $\Phi $ which is increasing and $\Phi
\left( 0\right) =0$. It is also shown in \cite{Z} that the inequality%
\begin{equation}
f\left( \sum_{r=1}^{n}\lambda _{r}x_{r}\right) \leq \sum_{r=1}^{n}\lambda
_{r}\left( f\left( x_{r}\right) -\Phi \left( \left\vert
x_{r}-\sum_{j=1}^{n}\lambda _{j}x_{j}\right\vert \right) \right)  \label{1.3}
\end{equation}%
holds, and the inequality 
\begin{equation}
f\left( \int gd\mu \right) \leq \int \left( f\left( g\left( s\right) \right)
-\Phi \left( \left\vert g\left( s\right) -\int gd\mu \right\vert \right)
\right) d\mu \left( s\right)  \label{1.4}
\end{equation}%
also holds for all probability measures $\mu $ and all non-negative $\mu $%
-integrable functions $g$.
\end{remark}

\begin{definition}
\label{Def3} \cite{GP} A real value function $f$ defined on a real interval $%
I$ is called $\phi $-convex if for all $x,y\in I$, $t\in \left[ 0,1\right] $
it satisfies%
\begin{equation*}
tf\left( x\right) +\left( 1-t\right) f\left( y\right) +t\phi \left( \left(
1-t\right) \left\vert x-y\right\vert \right) +\left( 1-t\right) \phi \left(
t\left\vert x-y\right\vert \right) \geq f\left( tx+\left( 1-t\right)
y\right) ,
\end{equation*}%
where $\phi :\left[ 0,l\left( I\right) \right] \rightarrow 
\mathbb{R}
_{+},$ ($l\left( I\right) $\ is the length of the interval $I)$,\ is a
non-negative error function. In case of a $\phi $-convex function where $%
\phi $ satisfies the $\Gamma $ property, there is a $\phi $ which is called 
\textbf{optimal error function}.
\end{definition}

\begin{remark}
\label{Rem4} In \cite[Theorem 3]{GP} it is proved that $f$ is $\phi $-convex
iff the inequality 
\begin{equation*}
f\left( \sum_{r=1}^{n}\lambda _{r}x_{r}\right) \leq \sum_{r=1}^{n}\lambda
_{r}\left( f\left( x_{r}\right) +\phi \left( \left\vert
x_{r}-\sum_{j=1}^{n}\lambda _{j}x_{j}\right\vert \right) \right)
\end{equation*}%
holds, from which it follows that 
\begin{equation*}
f\left( \int gd\mu \right) \leq \int \left( f\left( g\left( s\right) \right)
+\phi \left( \left\vert g\left( s\right) -\int gd\mu \right\vert \right)
\right) d\mu \left( s\right)
\end{equation*}%
also holds for all probability measures $\mu $ and all non-negative $\mu $%
-integrable functions $g$.
\end{remark}

\begin{remark}
\label{Rem5} The functions $f\left( x\right) =-x^{p},$ $x\geq 0,$ $1\leq
p\leq 2$ are $\phi $-convex with error function $\phi =-f$ (see \cite{AJS})
because $\left( -f\right) $ is superquadratic and negative as all
superquadratic functions which are negative on an interval $\left[ 0,A\right]
$, $A>0$, are $\phi $-convex.
\end{remark}

\begin{definition}
\label{Def4} \cite{GP} We say tht an error function $\phi \in E\left(
I\right) =\left[ 0,l\left( y\right) \right] $ possesses the property $\Gamma 
$ if it satisfies the inequality:%
\begin{equation}
\phi \left( x+y\right) \leq \phi \left( x\right) +\phi \left( y\right) +2%
\frac{x}{y}\phi \left( y\right) ,\quad x\geq 0,\quad y>0,\quad \left(
x+y\right) <l\left( I\right) .  \label{1.5}
\end{equation}%
The class of error function in $E\left( I\right) $\ with the property $%
\Gamma $ is denoted by $E^{\Gamma }\left( I\right) $. The subset of $%
E^{\Gamma }\left( I\right) $ whose elements also satisfy $\phi \left(
0\right) =0$ is denoted by $E_{0}^{\Gamma }\left( I\right) $.
\end{definition}

The reverse of Inequality (\ref{1.5}) is satisfied by all superquadratic
functions $f$ as proved in \cite{KMS}, that is, all the superquadratic
functions posess the property $\left( -\Gamma \right) $.

However, only few of $\phi $-convex functions posess the propety $\Gamma $
as shown in \cite{GP}, and these specific $\phi $-convex functions lead to
important inequalities:

\begin{theorem}
\label{Th2}\textbf{\ }\cite[Theorem 1]{KMS}.\ Let $f$ be a superquadratic
function. Then, when $a,b\geq 0$ the inequalities%
\begin{equation*}
f\left( a\right) +f\left( b\right) \leq f\left( a+b\right) -\frac{2a}{a+b}%
f\left( b\right) -\frac{2b}{a+b}f\left( a\right) ,\quad a+b>0
\end{equation*}%
and 
\begin{equation*}
f\left( a+b\right) \geq f\left( a\right) +f\left( b\right) +2\frac{a}{b}%
f\left( b\right) ,\quad a\geq 0,\quad b>0.
\end{equation*}%
hold for $a\geq 0,$ $b>0$.
\end{theorem}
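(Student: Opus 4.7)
The plan is to derive both inequalities directly from the defining condition \eqref{1.1} of superquadracity, with the auxiliary fact $\varphi(0)\le 0$ from Lemma \ref{Lem1}(i) as the lubricant.

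The key preliminary step I would establish is a lower bound on $xC_x$. Apply \eqref{1.1} with $y=0$: since $\varphi(0)\ge \varphi(x)+C_x(-x)+\varphi(x)$, one gets
\[
xC_x\ \ge\ 2\varphi(x)-\varphi(0)\ \ge\ 2\varphi(x),
\]
the last inequality because $\varphi(0)\le 0$. This is the only place the lemma is used, and it is what converts the abstract constant $C_x$ into a quantitative expression in $\varphi$.

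For the second inequality I would apply \eqref{1.1} with $x=b$, $y=a+b$, yielding
\[
\varphi(a+b)\ \ge\ \varphi(b)+aC_b+\varphi(a).
\]
Multiplying the bound $bC_b\ge 2\varphi(b)-\varphi(0)$ by $a/b\ge 0$ gives $aC_b\ge \tfrac{2a}{b}\varphi(b)-\tfrac{a}{b}\varphi(0)\ge \tfrac{2a}{b}\varphi(b)$, again using $\varphi(0)\le 0$. Substituting this into the previous display yields exactly the second inequality.

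For the first inequality I would apply \eqref{1.1} twice with $y=a+b$, once with $x=a$ and once with $x=b$, getting
\[
\varphi(a+b)\ \ge\ \varphi(a)+bC_a+\varphi(b),\qquad \varphi(a+b)\ \ge\ \varphi(b)+aC_b+\varphi(a),
\]
and then take the convex combination with weights $\tfrac{a}{a+b}$ and $\tfrac{b}{a+b}$. This produces
\[
\varphi(a+b)\ \ge\ \varphi(a)+\varphi(b)+\tfrac{b}{a+b}\,(aC_a)+\tfrac{a}{a+b}\,(bC_b),
\]
at which point plugging in $aC_a\ge 2\varphi(a)-\varphi(0)$ and $bC_b\ge 2\varphi(b)-\varphi(0)$, and absorbing the resulting $-\varphi(0)$ term (which is nonnegative), gives the desired bound after rearrangement.

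The only real obstacle is conceptual rather than technical: recognizing that the $y=0$ substitution turns Definition \ref{Def1} into a pointwise bound on $xC_x$, and that the correct weights for the convex combination are $a/(a+b)$ and $b/(a+b)$ so that the coefficients $\tfrac{b}{a+b}(aC_a)$ and $\tfrac{a}{a+b}(bC_b)$ match the target coefficients $\tfrac{2b}{a+b}\varphi(a)$ and $\tfrac{2a}{a+b}\varphi(b)$. Degenerate cases ($a=0$ for the first inequality, $a=0$ for the second) reduce to $\varphi(0)\le 0$ and are checked by inspection.
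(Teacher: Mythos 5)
Your proof is correct: the substitution $y=0$ in Definition \ref{Def1} combined with Lemma \ref{Lem1}(i) gives $xC_x\ge 2\varphi(x)$, and applying the defining inequality at $y=a+b$ (once with $x=b$ for the second inequality, and twice with $x=a$ and $x=b$, weighted by $\frac{a}{a+b}$ and $\frac{b}{a+b}$, for the first) yields exactly the stated bounds, with the degenerate cases handled by $\varphi(0)\le 0$. The paper itself does not prove Theorem \ref{Th2} but quotes it from \cite[Theorem 1]{KMS}, and your argument is essentially the standard derivation from the definition used there, so no changes are needed.
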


\begin{theorem}
\label{Th3} \cite[Theorem 4.1]{GP}.\textit{\ Let }$\phi \in E^{\Gamma
}\left( I\right) $\textit{, that is }$\phi $\textit{\ posesses the property }%
$\Gamma $\textit{: Then }$\sqrt{\phi }$\textit{\ and }$\frac{\phi \left(
t\right) }{t}$\textit{\ are subadditive on }$\left[ 0,l\left( I\right) %
\right] $\textit{. If in addition }$\varphi :\left[ 0,l\left( I\right) %
\right] \rightarrow 
\mathbb{R}
_{+}$\textit{\ is decreasing on }$\left[ 0,l\left( I\right) \right] $\textit{%
, then }$\Psi =\varphi \phi \in E^{\Gamma }\left( I\right) $\textit{. In
particular }$\Psi \in E\left( I\right) $\textit{\ and }$\frac{\Psi \left(
t\right) }{t^{2}}$\textit{\ is decreasing on }$\left[ 0,l\left( I\right) %
\right] $\textit{, then }$\Psi \in $\textit{\ }$E^{\Gamma }\left( I\right) $%
\textit{.}
\end{theorem}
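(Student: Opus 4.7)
The plan is to prove the four assertions in sequence by exploiting the symmetry of the $\Gamma$ inequality in two different ways. For the subadditivity of $\sqrt{\phi}$, I would apply the $\Gamma$ inequality in both orderings of a pair $x,y>0$, obtaining two bounds on $\phi(x+y)$ with correction terms $2\frac{x}{y}\phi(y)$ and $2\frac{y}{x}\phi(x)$ respectively. Since the product of these correction terms is $4\phi(x)\phi(y)$, the elementary estimate $\min(a,b)\leq\sqrt{ab}$ yields
\[
\phi(x+y)\leq \phi(x)+\phi(y)+2\sqrt{\phi(x)\phi(y)}=\bigl(\sqrt{\phi(x)}+\sqrt{\phi(y)}\bigr)^{2},
\]
and taking square roots gives subadditivity; the edge cases $x=0$ or $y=0$ are handled directly using $\phi\geq 0$.

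For the subadditivity of $\phi(t)/t$, I would instead \emph{add} the two symmetric $\Gamma$ bounds and then divide by $2(x+y)$. The key algebraic observation is that the $\phi(y)$ contribution on the right-hand side becomes $\phi(y)\cdot\frac{1+x/y}{x+y}=\phi(y)/y$, and symmetrically the $\phi(x)$ contribution collapses to $\phi(x)/x$, giving the desired inequality exactly.

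For the statement that $\Psi=\varphi\phi\in E^{\Gamma}(I)$ whenever $\varphi\geq 0$ is decreasing, I would multiply the $\Gamma$ inequality for $\phi$ through by $\varphi(x+y)\geq 0$ and then, on the right-hand side, replace $\varphi(x+y)$ by the larger quantity $\varphi(x)$ in the $\phi(x)$ term and by $\varphi(y)$ in the $\phi(y)$ and $\frac{x}{y}\phi(y)$ terms. Monotonicity of $\varphi$ validates each substitution, and the output is precisely the $\Gamma$ inequality for $\Psi$. The ``in particular'' clause then follows by applying this step to the factorization $\Psi(t)=\bigl(\Psi(t)/t^{2}\bigr)\cdot t^{2}$: the first factor is decreasing by hypothesis, while $\phi(t)=t^{2}$ satisfies $\Gamma$ with equality, as the direct expansion $(x+y)^{2}=x^{2}+y^{2}+2\frac{x}{y}\,y^{2}$ shows.

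The main obstacle is the first item: the non-obvious ingredient is to symmetrize the $\Gamma$ inequality and then invoke $\min\leq$ geometric mean so that the correction term becomes exactly the cross term $2\sqrt{\phi(x)\phi(y)}$ needed to complete a perfect square. Once that trick is identified, each of the remaining assertions reduces to a short algebraic rearrangement or a one-line monotonicity argument.
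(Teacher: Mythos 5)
Your proposal is correct in all four parts: the symmetrized $\Gamma$ inequality combined with $\min(a,b)\leq\sqrt{ab}$ does give $\phi(x+y)\leq\bigl(\sqrt{\phi(x)}+\sqrt{\phi(y)}\bigr)^{2}$; averaging the two symmetric bounds and dividing by $x+y$ gives $\frac{\phi(x+y)}{x+y}\leq\frac{\phi(x)}{x}+\frac{\phi(y)}{y}$ exactly as you compute; multiplying the $\Gamma$ inequality by $\varphi(x+y)\geq 0$ and using monotonicity term by term (legitimate since $\phi\geq 0$) yields the $\Gamma$ property for $\Psi=\varphi\phi$; and the final clause follows from the factorization $\Psi(t)=\bigl(\Psi(t)/t^{2}\bigr)t^{2}$ together with the observation that $t^{2}$ satisfies (\ref{1.5}) with equality. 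Note, however, that this paper does not prove Theorem \ref{Th3} at all: it is imported verbatim from \cite[Theorem 4.1]{GP}, so there is no internal proof to compare against; your argument is a correct, self-contained reconstruction of the kind of proof given in that reference. The only cosmetic points are that the edge case $t=0$ should be excluded when invoking monotonicity of $\Psi(t)/t^{2}$, and that your reading of the somewhat garbled final sentence of the statement (as: if $\Psi\in E(I)$ and $\Psi(t)/t^{2}$ is decreasing, then $\Psi\in E^{\Gamma}(I)$) is the intended one.
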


The following theorem is an external version of Jensen inequality for
superquadratic functions:

\begin{theorem}
\bigskip \label{Th4} \cite[Theorem 1]{BPV} Let $f$ be a superquadratic
function, let $x_{i}\geq 0,$ $i=1,...,n$, $\nu _{n}\geq 1,$ $\nu _{i}\leq 0,$
$i=1,...,n-1$ and let $\sum_{i=1}^{n}\nu _{i}x_{i}>0$ then the inequality 
\begin{eqnarray}
&&f\left( \sum_{i=1}^{n}\nu _{i}x_{i}\right)  \label{1.6} \\
&\geq &\sum_{i=1}^{n}\nu _{i}f\left( x_{i}\right) +f\left( \left\vert
\sum_{i=1}^{n}\nu _{i}x_{i}-x_{n}\right\vert \right) -\sum_{i=1}^{n-1}\nu
_{i}f\left( \left\vert x_{i}-x_{n}\right\vert \right)  \notag
\end{eqnarray}%
holds.
\end{theorem}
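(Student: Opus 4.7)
Write $S:=\sum_{i=1}^{n}\nu_{i}x_{i}$, which is $\geq 0$ by hypothesis so $f(S)$ is defined; the implicit affine normalization $\sum_{i=1}^{n}\nu_{i}=1$ (standard in this Jensen-type setting, since otherwise $S$ has no mean interpretation) will be used at the end. The idea is to apply the defining inequality (\ref{1.1}) of superquadracity \emph{at the anchor point} $x=x_{n}$, once with $y=S$ and once, for each $i<n$, with $y=x_{i}$, and then combine the two bundles.

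First I would write down
\begin{equation*}
f(S)\geq f(x_{n})+C_{x_{n}}(S-x_{n})+f(|S-x_{n}|), \qquad (\mathrm{I})
\end{equation*}
and, for each $i=1,\ldots,n-1$,
\begin{equation*}
f(x_{i})\geq f(x_{n})+C_{x_{n}}(x_{i}-x_{n})+f(|x_{i}-x_{n}|).
\end{equation*}
Multiplying the $i$-th line by the \emph{non-positive} weight $\nu_{i}$ flips the inequality, and summing over $i=1,\ldots,n-1$ and then adding $\nu_{n}f(x_{n})$ to both sides produces
\begin{equation*}
\sum_{i=1}^{n}\nu_{i}f(x_{i})-\sum_{i=1}^{n-1}\nu_{i}f(|x_{i}-x_{n}|)\leq\Bigl(\sum_{i=1}^{n}\nu_{i}\Bigr)f(x_{n})+C_{x_{n}}\sum_{i=1}^{n-1}\nu_{i}(x_{i}-x_{n}). \qquad (\mathrm{II})
\end{equation*}

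Next I would simplify the right-hand side of (II) using $\sum_{i=1}^{n}\nu_{i}=1$: the coefficient of $f(x_{n})$ collapses to $1$, and the telescoping identity
\begin{equation*}
\sum_{i=1}^{n-1}\nu_{i}(x_{i}-x_{n})=\Bigl(\sum_{i=1}^{n-1}\nu_{i}x_{i}\Bigr)-\Bigl(\sum_{i=1}^{n-1}\nu_{i}\Bigr)x_{n}=(S-\nu_{n}x_{n})-(1-\nu_{n})x_{n}=S-x_{n}
\end{equation*}
reduces (II) to $\sum_{i=1}^{n}\nu_{i}f(x_{i})-\sum_{i=1}^{n-1}\nu_{i}f(|x_{i}-x_{n}|)\leq f(x_{n})+C_{x_{n}}(S-x_{n})$. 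Subtracting this from (I) eliminates both $f(x_{n})$ and the $C_{x_{n}}$-term and yields exactly (\ref{1.6}).

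The main obstacle is purely bookkeeping: one has to be careful that the sign flip from multiplying by $\nu_{i}\leq 0$ is the correct direction to align the $\nu_{i}f(|x_{i}-x_{n}|)$ terms with the RHS of (\ref{1.6}), and one must recognize that the apparent difficulty — the presence of the unknown constant $C_{x_{n}}$ — vanishes precisely because $(\nu_{1},\ldots,\nu_{n})$ is an affine weight vector, so the linear $C_{x_{n}}$-contribution from (II) cancels the $C_{x_{n}}(S-x_{n})$ in (I). The domain conditions (that $S,\,|S-x_{n}|,\,|x_{i}-x_{n}|\geq 0$) are immediate, so no case analysis on the sign of $S-x_{n}$ is required.
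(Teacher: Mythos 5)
Your argument is correct, and it proves exactly the intended statement once one adds the normalization $\sum_{i=1}^{n}\nu_{i}=1$, which you rightly flag: the theorem as printed omits it, but it is genuinely needed (for $f(x)=x^{2}$ the two sides of (\ref{1.6}) differ by $x_{n}^{2}\left(\sum_{i}\nu_{i}-1\right)$), and the paper's own proof of the analogous Theorem \ref{Th10} uses it implicitly through the identity $\frac{1}{\nu_{n}}+\sum_{i=1}^{n-1}\frac{-\nu_{i}}{\nu_{n}}=1$. Your route differs in presentation from the one the paper points to (the proof in \cite{BPV}, mirrored in the proof of Theorem \ref{Th10}): there one writes $x_{n}=\frac{1}{\nu_{n}}\left(\sum_{i=1}^{n}\nu_{i}x_{i}\right)+\sum_{i=1}^{n-1}\frac{-\nu_{i}}{\nu_{n}}x_{i}$, observes that the coefficients are nonnegative and sum to $1$, applies the already-established Jensen inequality for superquadratic functions (Theorem \ref{Th1}) at this convex combination whose barycenter is $x_{n}$, and multiplies by $\nu_{n}$. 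You instead go back to Definition \ref{Def1} with anchor $x=x_{n}$, take $y=S$ and $y=x_{i}$, and combine with weights $1$ and $\nu_{i}\leq 0$, letting the $C_{x_{n}}$-terms cancel because $\sum_{i=1}^{n-1}\nu_{i}(x_{i}-x_{n})=S-x_{n}$. At the level of the underlying algebra these are the same linear combination of the same instances of (\ref{1.1}) (your cancellation of $C_{x_{n}}$ is precisely what happens inside the proof of Theorem \ref{Th1} when it is applied at that combination), but your version is more self-contained — it needs only the definition, not the integral/discrete Jensen theorem — while the paper's framing has the advantage of transferring verbatim to other classes defined by Jensen-type inequalities ($\phi$-convex and uniformly convex functions), which is exactly how Theorem \ref{Th10} is obtained.
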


Theorem \ref{Th4} was proved in 2008 in \cite[Theorem 1]{BPV}. It was proved
again only for the special case $n=2$ in \cite[Theorem 2]{KMS} in 2014. It
says in this case:

\begin{theorem}
\label{Th5} \cite[Theorem 2]{KMS}, \cite[Theorem 3, for $n=2$]{BPV} . Let $%
g\ $ be a superquadratic function and $a,b\geq 0$. If\ $\nu <0$ and $\left(
1-\nu \right) a+\nu b\geq 0$, then the inequality 
\begin{eqnarray}
&&\left( 1-\nu \right) g\left( a\right) +\nu g\left( b\right)  \label{1.7} \\
&\leq &g\left( \left( 1-\nu \right) a+\nu b\right) +\nu g\left( \left\vert
a-b\right\vert \right) -g\left( \nu \left\vert a-b\right\vert \right)  \notag
\end{eqnarray}%
holds, while for $\nu >1$, then the inequality 
\begin{eqnarray}
&&\left( 1-\nu \right) g\left( a\right) +\nu g\left( b\right)  \label{1.8} \\
&\leq &g\left( \left( 1-\nu \right) a+\nu b\right) +\left( 1-\nu \right)
g\left( \left\vert a-b\right\vert \right) -g\left( \left( \nu -1\right)
\left\vert a-b\right\vert \right) ,  \notag
\end{eqnarray}%
holds, provided that \ $\left( 1-\nu \right) a+\nu b\geq 0$.
\end{theorem}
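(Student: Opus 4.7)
The plan is to reduce both inequalities (\ref{1.7}) and (\ref{1.8}) to the ordinary two-point Jensen-type inequality for superquadratic functions (the discrete form of Theorem \ref{Th1}) by regarding $x_{0}:=(1-\nu)a+\nu b$ as a legitimate non-negative point and then expressing either $a$ or $b$ as an \emph{honest} convex combination of $x_{0}$ and the other of $a,b$. This sidesteps any need to use Theorem \ref{Th4} directly for $n=2$, and works because the hypothesis $x_{0}\geq 0$ together with $\nu\notin[0,1]$ produces non-negative weights.

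For the case $\nu<0$: one has $1-\nu>1$, and a short check gives
\[
a \;=\; \alpha\, b + \beta\, x_{0},\qquad \alpha:=\frac{-\nu}{1-\nu}>0,\quad \beta:=\frac{1}{1-\nu}>0,\quad \alpha+\beta=1.
\]
Applying the two-point discrete inequality from Theorem \ref{Th1} to $g(a)=g(\alpha b+\beta x_{0})$ and using the identities $|b-a|=\beta|b-x_{0}|$, $|x_{0}-a|=\alpha|b-x_{0}|$ together with $|b-x_{0}|=(1-\nu)|a-b|$, one obtains
\[
g(a)\;\leq\;\alpha g(b)+\beta g(x_{0})-\alpha g(|a-b|)-\beta g(-\nu|a-b|).
\]
Multiplying through by $1-\nu>0$ and rearranging yields (\ref{1.7}), with the understanding that $g(\nu|a-b|)$ on the right-hand side of (\ref{1.7}) is to be read as $g(|\nu||a-b|)=g(-\nu|a-b|)$ (since $g$ is defined only on $[0,\infty)$). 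The case $\nu>1$ is entirely symmetric: now $1-\nu<0$ and one writes $b=\gamma a+\delta x_{0}$ with $\gamma:=\tfrac{\nu-1}{\nu}$, $\delta:=\tfrac{1}{\nu}$, applies Theorem \ref{Th1} to $g(b)$, computes $|a-x_{0}|=\nu|a-b|$ so that $\delta|a-x_{0}|=|a-b|$ and $\gamma|a-x_{0}|=(\nu-1)|a-b|$, and then multiplies by $\nu>0$ and rearranges to obtain (\ref{1.8}).

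The main obstacle is not conceptual but rather bookkeeping: one must track the signs of $\nu$, $1-\nu$, and the derived weights $\alpha,\beta$ (respectively $\gamma,\delta$) so that all coefficients invoked in Theorem \ref{Th1} really lie in $[0,1]$ and all arguments passed to $g$ are non-negative. Once the two convex-combination identities for $a$ and $b$ are set up correctly, each half of the theorem follows from a single invocation of the Jensen-type inequality plus one clearing of denominators.
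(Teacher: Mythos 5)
Your proposal is correct, and it is essentially the same argument the paper itself relies on (it is the $n=2$ instance of the technique behind Theorem \ref{Th4} and the proof of Theorem \ref{Th10}): express the point carrying the coefficient larger than $1$ ($a$ when $\nu<0$, $b$ when $\nu>1$) as a genuine convex combination of the external point $(1-\nu)a+\nu b$ and the remaining node, apply the two-point Jensen refinement of Theorem \ref{Th1}, and multiply by the large coefficient. Your reading of $g\left( \nu \left\vert a-b\right\vert \right)$ in (\ref{1.7}) as $g\left( \left\vert \nu \right\vert \left\vert a-b\right\vert \right)$ is indeed the intended interpretation, since $g$ is defined on $\left[ 0,\infty \right)$.
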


A refinement of Hermite-Hadamard inequality for superquadratic functions is
the following Theorem \ref{Th6}.

\begin{theorem}
\label{Th6} \cite[Theorem 8]{BPV} Let $\varphi :\left[ 0,\infty \right)
\rightarrow 
\mathbb{R}
$ be an integrable superquadratic function, $0\leq a<b$. Then%
\begin{eqnarray*}
&&\varphi \left( \frac{a+b}{2}\right) +\frac{1}{b-a}\int_{a}^{b}\varphi
\left( \left\vert t-\frac{a+b}{2}\right\vert \right) dt \\
&\leq &\frac{1}{b-a}\int_{a}^{b}\varphi \left( t\right) dt \\
&\leq &\frac{\varphi \left( a\right) +\varphi \left( b\right) }{2} \\
&&-\frac{1}{\left( b-a\right) ^{2}}\int_{a}^{b}\left[ \left( b-t\right)
\varphi \left( t-a\right) +\left( t-a\right) \varphi \left( b-t\right) %
\right] dt.
\end{eqnarray*}
\end{theorem}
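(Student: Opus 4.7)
The plan is to derive both inequalities by specializing the integral and discrete Jensen inequalities of Theorem \ref{Th1} to Lebesgue measure on $[a,b]$, and then reading off the boundary terms. These are the standard ingredients that turn Jensen for superquadratic functions into a refined Hermite--Hadamard bound.

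For the left inequality I would take $\mu$ to be the normalized Lebesgue measure on $[a,b]$ (i.e.\ $d\mu = dt/(b-a)$) and set $f(t)=t$. Then $\int f\,d\mu = (a+b)/2$, so inserting this into (\ref{1.2}) gives
\[
\varphi\!\left(\frac{a+b}{2}\right) \;\le\; \frac{1}{b-a}\int_{a}^{b}\!\varphi(t)\,dt \;-\; \frac{1}{b-a}\int_{a}^{b}\varphi\!\left(\left|t-\frac{a+b}{2}\right|\right)dt,
\]
which after rearrangement is exactly the lower bound in Theorem \ref{Th6}.

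For the right inequality I would use the discrete version in Theorem \ref{Th1} with $n=2$, writing every $t\in[a,b]$ as the convex combination
\[
t \;=\; \lambda_{1}\,a+\lambda_{2}\,b, \qquad \lambda_{1}=\frac{b-t}{b-a},\ \ \lambda_{2}=\frac{t-a}{b-a}.
\]
Since $|a-t|=t-a$ and $|b-t|=b-t$, the discrete inequality becomes the pointwise estimate
\[
\varphi(t) \;\le\; \frac{b-t}{b-a}\bigl[\varphi(a)-\varphi(t-a)\bigr] + \frac{t-a}{b-a}\bigl[\varphi(b)-\varphi(b-t)\bigr].
\]
Integrating this over $[a,b]$ and using $\int_{a}^{b}\frac{b-t}{b-a}\,dt=\int_{a}^{b}\frac{t-a}{b-a}\,dt=\frac{b-a}{2}$ for the first two terms, then dividing through by $b-a$, yields
\[
\frac{1}{b-a}\int_{a}^{b}\varphi(t)\,dt \;\le\; \frac{\varphi(a)+\varphi(b)}{2} \;-\; \frac{1}{(b-a)^{2}}\int_{a}^{b}\!\bigl[(b-t)\varphi(t-a)+(t-a)\varphi(b-t)\bigr]\,dt,
\]
which is the upper bound.

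There is no real obstacle here beyond bookkeeping; both halves are one-line specializations of Theorem \ref{Th1}. The only thing to be careful about is confirming that integrability of $\varphi$ on $[0,b-a]$ (which is implicit in the hypothesis that $\varphi$ is integrable) makes the terms $\int_{a}^{b}\varphi(|t-(a+b)/2|)\,dt$ and $\int_{a}^{b}(b-t)\varphi(t-a)\,dt$, $\int_{a}^{b}(t-a)\varphi(b-t)\,dt$ well-defined, which follows by the change of variables $s=|t-(a+b)/2|$ and $s=t-a$ (respectively $s=b-t$) reducing each of them to $\int_{0}^{(b-a)/2}\varphi(s)\,ds$ or $\int_{0}^{b-a}\varphi(s)(b-a-s)/(b-a)\,ds$.
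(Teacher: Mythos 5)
Your argument is correct and is exactly the standard one: the lower bound is Theorem \ref{Th1} with $d\mu=dt/(b-a)$ and $f(t)=t$, and the upper bound is the discrete case $n=2$ applied pointwise to $t=\frac{b-t}{b-a}a+\frac{t-a}{b-a}b$ and then integrated. The paper only cites Theorem \ref{Th6} from \cite{BPV} without proof, but this is the same technique the paper itself uses for the analogous Theorem \ref{Th11}, so nothing further is needed.
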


A. Cipu \cite{CIPU} proved the following theorem:

\begin{theorem}
\label{Th7} Let $n>1$\ be an integer and $x_{1},x_{2},...,x_{n}$\ be
positive real numbers. Denote $a=\frac{1}{n}\sum_{i=1}^{n}x_{i}$\ \ and \ $b=%
\frac{1}{n}\sum_{i=1}^{n}x_{i}^{2},$\ \ then $\ \underset{1\leq k\leq n}{%
\max }\left\{ \left\vert x_{k}-a\right\vert \right\} \leq \sqrt{\left(
n-1\right) \left( b-a^{2}\right) }.$
\end{theorem}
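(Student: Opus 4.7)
The plan is to translate the statement into a clean inequality about the variance of the sample $x_1,\dots,x_n$, and then reduce it to a one-line application of the Cauchy--Schwarz inequality.

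First, I would record the standard identity
\[
b-a^{2}=\frac{1}{n}\sum_{i=1}^{n}x_{i}^{2}-a^{2}=\frac{1}{n}\sum_{i=1}^{n}(x_{i}-a)^{2},
\]
which holds because $\sum_{i=1}^{n}(x_{i}-a)=0$. Hence the target inequality is equivalent to
\[
n\,\max_{1\leq k\leq n}(x_{k}-a)^{2}\;\leq\;(n-1)\sum_{i=1}^{n}(x_{i}-a)^{2}.
\]
Note that here positivity of the $x_i$ plays no role; the inequality is about deviations from the mean.

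Next, let $k_{0}$ be an index where the maximum is attained, and set $M=(x_{k_{0}}-a)^{2}$. Using $\sum_{i=1}^{n}(x_{i}-a)=0$ I would write
\[
x_{k_{0}}-a=-\sum_{i\neq k_{0}}(x_{i}-a),
\]
and apply the Cauchy--Schwarz inequality to the $n-1$ terms on the right to obtain
\[
M=(x_{k_{0}}-a)^{2}\leq (n-1)\sum_{i\neq k_{0}}(x_{i}-a)^{2}=(n-1)\Bigl(\sum_{i=1}^{n}(x_{i}-a)^{2}-M\Bigr).
\]

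Finally, rearranging this gives $nM\leq (n-1)\sum_{i=1}^{n}(x_{i}-a)^{2}$, which is exactly the reformulated inequality; taking square roots and recalling the variance identity recovers the claim. There is no genuine obstacle here: the only step that requires a moment's thought is spotting that, because the deviations sum to zero, the largest one equals the negative of the sum of the other $n-1$ deviations, so Cauchy--Schwarz produces the sharp constant $n-1$ (with equality precisely when the remaining $n-1$ deviations are equal).
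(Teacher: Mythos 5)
Your proof is correct: the variance identity plus Cauchy--Schwarz applied to $x_{k_0}-a=-\sum_{i\neq k_0}(x_i-a)$ gives $nM\leq(n-1)\sum_{i=1}^{n}(x_i-a)^2=n(n-1)(b-a^2)$, which is the claim. The paper itself only cites this result (Cipu) without proof, but your argument is exactly the $p=2$, $\alpha_i=\tfrac{1}{n}$ specialization of the technique the paper uses to prove Theorem \ref{Th12} (and that \cite{ABP} uses for Theorem \ref{Th8}): there the deviations $y_i=x_i-a$ with $\sum_i\alpha_i y_i=0$ are handled by H\"older's inequality, which reduces to your Cauchy--Schwarz step in this special case, so you have in effect reconstructed the paper's own method rather than a genuinely different one.
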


Theorems \ref{Th8} and \ref{Th9} are stated and proved in \cite{ABP}:

\begin{theorem}
\label{Th8} Let $n>1$ be an integer and $x_{1},x_{2},...,x_{n}$ be positive
real numbers. Denote \ $a=\sum_{i=1}^{n}\alpha _{i}x_{i}$ \ and \ $%
c=\sum_{i=1}^{n}\alpha _{i}x_{i}^{p},$ \ where \ $0<\alpha _{i}<1,$ $\
i=1,...,n\ \ \ \sum \alpha _{i}=1,$ \ $p\geq 2,$ then \ \ 
\begin{equation}
\ \underset{1\leq k\leq n}{\max }\left\{ \left\vert x_{k}-a\right\vert
\right\} \leq T\left( c-a^{p}\right) ^{\frac{1}{p}}  \label{1.9}
\end{equation}%
where 
\begin{equation*}
T=\frac{\left( 1-\alpha _{0}\right) ^{1-\frac{1}{p}}}{\alpha _{0}^{\frac{1}{p%
}}\left( \alpha _{0}^{p-1}+\left( 1-\alpha _{0}\right) ^{p-1}\right) ^{\frac{%
1}{p}}},\qquad \alpha _{0}=\underset{1\leq k\leq n}{\min }\left( \alpha
_{k}\right) .
\end{equation*}
\end{theorem}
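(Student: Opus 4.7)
The plan is to apply the superquadratic Jensen inequality (Theorem \ref{Th1}) to $\varphi(t)=t^p$, which is superquadratic for $p\ge 2$ by Remark \ref{Rem1}. Taking $\lambda_i=\alpha_i$ and using $a=\sum \alpha_i x_i$, this yields
\begin{equation*}
\sum_{i=1}^n \alpha_i |x_i-a|^p \;\le\; c - a^p.
\end{equation*}
So the strategy is to bound the left-hand side from below by a multiple of $M^p$, where $M=\max_k |x_k-a|$.

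Let $k_0$ be an index with $|x_{k_0}-a|=M$ and set $\beta=\alpha_{k_0}\ge\alpha_0$. I would split the sum as
\begin{equation*}
\sum_{i=1}^n \alpha_i |x_i-a|^p \;=\; \beta M^p + \sum_{i\neq k_0}\alpha_i|x_i-a|^p.
\end{equation*}
For the second piece, Jensen for the ordinary convex function $t\mapsto t^p$ applied with weights $\alpha_i/(1-\beta)$ gives $\sum_{i\neq k_0}\alpha_i|x_i-a|^p \ge (1-\beta)^{1-p}\bigl(\sum_{i\neq k_0}\alpha_i|x_i-a|\bigr)^p$. Then I would use the centering identity $\sum_i \alpha_i(x_i-a)=0$, which implies
\begin{equation*}
\sum_{i\neq k_0}\alpha_i|x_i-a| \;\ge\; \Bigl|\sum_{i\neq k_0}\alpha_i(x_i-a)\Bigr| \;=\; \beta M.
\end{equation*}
Combining gives $c-a^p \ge \beta M^p + \beta^p M^p/(1-\beta)^{p-1} = M^p\,\beta\bigl[(1-\beta)^{p-1}+\beta^{p-1}\bigr]/(1-\beta)^{p-1}$, so
\begin{equation*}
M \;\le\; \frac{(1-\beta)^{1-1/p}}{\beta^{1/p}\bigl[(1-\beta)^{p-1}+\beta^{p-1}\bigr]^{1/p}}\,(c-a^p)^{1/p}.
\end{equation*}

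The last step is to replace $\beta$ by the worst-case weight $\alpha_0$. Denoting the prefactor by $h(\beta)$, I would observe that
\begin{equation*}
h(\beta)^{-p} \;=\; \beta + \frac{\beta^p}{(1-\beta)^{p-1}},
\end{equation*}
which is manifestly strictly increasing in $\beta\in(0,1)$ since both terms are increasing there. Hence $h$ is decreasing, and from $\beta\ge\alpha_0$ we conclude $h(\beta)\le h(\alpha_0)=T$, giving (\ref{1.9}). The only delicate point is the combined use of Jensen (for $t^p$) with the centering identity to produce the exact constant $\beta^p/(1-\beta)^{p-1}$; the monotonicity of $h$ is then a short calculus check that makes the uniform constant $T$ depend only on $\alpha_0$. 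As a sanity check, for $p=2$ and $\alpha_i=1/n$ one recovers $T=\sqrt{n-1}$, matching Cipu's Theorem \ref{Th7}.
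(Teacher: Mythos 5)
Your proof is correct, and it is essentially the argument the paper itself uses for the analogous Theorem \ref{Th12} (the paper only cites \cite{ABP} for Theorem \ref{Th8}): the superquadratic Jensen inequality for $t^{p}$ gives $\sum_{i}\alpha_{i}\left\vert x_{i}-a\right\vert ^{p}\leq c-a^{p}$, then H\"{o}lder (your weighted Jensen for $t^{p}$ is the same step) combined with the centering identity $\sum_{i}\alpha_{i}\left( x_{i}-a\right) =0$ isolates the maximal term, and the monotonicity of the resulting weight function replaces $\alpha_{k_{0}}$ by $\alpha_{0}$. Your explicit treatment of the maximizing index and the check that $h(\beta)^{-p}=\beta+\beta^{p}/(1-\beta)^{p-1}$ is increasing are just cleaner phrasings of the paper's closing monotonicity remark.
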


\begin{remark}
\label{Rem6}It is easy to see that Theorem \ref{Th7} is a special case of
Theorem \ref{Th8} where \ $p=2,$ $\ \alpha _{k}=\frac{1}{n},$ \ $k=1,...,n$.
\end{remark}

\begin{theorem}
\label{Th9} Let $f$ be a positive superquadratic function on $\left[
0,\infty \right) $, which satisfies \ $f\left( AB\right) \leq f\left(
A\right) f\left( B\right) $ \ for $A>0$, $B>0$.

Let $x_{1},x_{2},...,x_{n}$ be positive real numbers. Denote $%
a=\sum_{i=1}^{n}\frac{x_{i}}{n}$ \ and \ $d=\frac{1}{n}\sum_{i=1}^{n}f\left(
x_{i}\right) ,$ \ then 
\begin{equation}
\underset{1\leq k\leq n}{\max }\left( f\left( \left\vert x_{k}-a\right\vert
\right) \right) \leq \frac{f\left( n-1\right) n}{n-1+f\left( n-1\right) }%
\left( d-f\left( a\right) \right) .  \label{1.10}
\end{equation}
\end{theorem}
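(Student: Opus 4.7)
The plan is to sandwich $\sum_{i=1}^n f(|x_i-a|)$ between an upper bound coming from Jensen's inequality for superquadratic functions and a lower bound expressed in terms of the maximum deviation $M = \max_k |x_k-a|$, and then solve the resulting inequality for $f(M)$.

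For the upper bound, I would apply the discrete form of Theorem \ref{Th1} with equal weights $\lambda_i = 1/n$, which yields
$$
f(a) \leq \frac{1}{n}\sum_{i=1}^n\bigl(f(x_i) - f(|x_i-a|)\bigr) = d - \frac{1}{n}\sum_{i=1}^n f(|x_i-a|),
$$
so $\sum_{i=1}^n f(|x_i-a|) \leq n\,(d-f(a))$.

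For the lower bound, fix an index $k$ attaining $M=|x_k-a|$. Since $\sum_i(x_i-a)=0$, the triangle inequality gives $\sum_{i\neq k}|x_i-a|\geq M$. Positivity of $f$ together with Lemma \ref{Lem1}(iii) makes $f$ convex on $[0,\infty)$ with $f(0)=f'(0)=0$, hence nondecreasing. Ordinary (convex) Jensen applied to the $n-1$ values $y_i = |x_i-a|$, $i\neq k$, followed by monotonicity, gives
$$
\sum_{i\neq k} f(y_i) \;\geq\; (n-1)\, f\!\left(\frac{1}{n-1}\sum_{i\neq k} y_i\right) \;\geq\; (n-1)\, f\!\left(\frac{M}{n-1}\right).
$$
The submultiplicativity hypothesis, applied to $M=(n-1)\cdot\tfrac{M}{n-1}$, gives $f(M)\leq f(n-1)\,f(M/(n-1))$, i.e.\ $f(M/(n-1)) \geq f(M)/f(n-1)$. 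Adding back the $i=k$ term $f(M)$ produces
$$
\sum_{i=1}^n f(|x_i-a|) \;\geq\; f(M)\!\left(1 + \frac{n-1}{f(n-1)}\right) \;=\; f(M)\cdot \frac{n-1+f(n-1)}{f(n-1)}.
$$
Chaining the two bounds and solving for $f(M)$ then gives exactly (\ref{1.10}).

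The main obstacle I expect is engineering this lower bound so that both hypotheses on $f$ — convexity (from superquadracity plus positivity) and submultiplicativity — pull their weight in sequence: convex Jensen alone yields $(n-1)\,f(M/(n-1))$, and it is precisely the submultiplicativity that converts this into the desired $f(M)(n-1)/f(n-1)$. The equidistribution configuration, in which the remaining $n-1$ deviations are all equal to $M/(n-1)$, is the tight case one must aim for.
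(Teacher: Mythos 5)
Your proof is correct, and its skeleton is the same as the argument this paper actually writes out for the parallel Theorem \ref{Th13} (for Theorem \ref{Th9} itself the paper only cites its source): an upper bound $\sum_{i=1}^{n} f\left(\left\vert x_{i}-a\right\vert\right) \leq n\left(d-f\left(a\right)\right)$ from the Jensen-type inequality of Theorem \ref{Th1}, plus a lower bound for that sum in terms of $f\left(M\right)$ obtained from $\sum_{i\neq k}\left\vert x_{i}-a\right\vert \geq M$ and submultiplicativity, then solving for $f\left(M\right)$. The one genuine difference is in how the lower bound is engineered: the paper's version (as written for $\Phi$ in Theorem \ref{Th13}) first bounds $\Phi\left(\left\vert y_{n}\right\vert\right)\leq \Phi\left(\sum_{i\neq n}\left\vert y_{i}\right\vert\right)$ and then passes through concavity of $\Phi^{-1}$ before applying submultiplicativity at the point $\Phi^{-1}$ of the average, which forces it to assume convexity (and implicitly invertibility) of the modulus; you instead apply convex Jensen directly to $f$ on the deviations, use monotonicity to replace the average by $M/\left(n-1\right)$, and apply submultiplicativity at $M=\left(n-1\right)\cdot\frac{M}{n-1}$. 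Your route is slightly cleaner, avoids the inverse function altogether, and correctly gets the needed convexity and monotonicity of $f$ for free from positivity plus superquadracity via Lemma \ref{Lem1}(iii), rather than as an extra hypothesis. (Minor housekeeping only: one should note $f\left(n-1\right)>0$ to divide, and that the trivial case $M=0$ is immediate.)
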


\begin{remark}
\label{Rem7} For \ $\alpha _{i}=\frac{1}{n},$\ \ $i=1,...,n$\ \ Theorem \ref%
{Th8} is a special case of Theorem \ref{Th9}.
\end{remark}

In Section 2 we use theorems \ref{Th4}, \ref{Th5} and \ref{Th6} which are
related to superquadratic functions to get similar results related to $\phi $%
-convex functions and uniformly convex functions.

In Section 3 we use theorems \ref{Th7}, \ref{Th8} and \ref{Th9} which are
related to superquadratic functions to get similar results related to $\phi $%
-convex functions and uniformly convex functions.

In Section 4 we show examples about relations between superquadracity and
some other types of convex functions.

The results and the examples shown in this paper are just few cases out of
many others, where we use similar techniques of these generating results for
superquadratic functions to get new results related to uniform convexity and 
$\phi $-convexity. For instance we can adapt the inequalities obtained by
using Theorem \ref{Th2} proved in \cite{KMS} only to the special cases of $%
\phi $-convex functions with error function $\phi $ which satisfy Property $%
\Gamma $, among them the optimal error function (see Theorem \ref{Th3}).

\section{\textbf{External versions of Jensen inequality and Hermite-Hadamard
inequality for }$\protect\phi $\textbf{-convex functions and uniform convex
functions.}}

Theorem \ref{Th10} deals with $\phi $-convex functions and uniformly convex
functions. The proof of the Theorem \ref{Th10} is similar to the proof of
Theorem \ref{Th4} given in \cite[Theorem 1]{BPV} for the convenience of the
reader we provide here a complete proof of Theorem \ref{Th10} for $\phi $%
-convex functions and uniformly convex functions.

\begin{theorem}
\label{Th10} Let $f:\left[ a,b\right] \rightarrow 
\mathbb{R}
$ where $\phi :\left[ 0,b-a\right] \rightarrow 
\mathbb{R}
_{+}$. \ Let $x_{i}\in \left( a,b\right) ,$ $i=1,...,n$, $\nu _{n}\geq 1,$ $%
\nu _{i}\leq 0,$ $i=1,...,n-1$ and let $\sum_{i=1}^{n}\nu _{i}x_{i}\in
\left( a,b\right) $ then:

a. \ \ if $f$ is $\phi $-convex, where the error function is $\phi $, the
inequality 
\begin{eqnarray}
&&f\left( \sum_{i=1}^{n}\nu _{i}x_{i}\right)  \label{2.1} \\
&\geq &\sum_{i=1}^{n}\nu _{i}f\left( x_{i}\right) -\phi \left( \left\vert
\sum_{i=1}^{n}\nu _{i}x_{i}-x_{n}\right\vert \right) +\sum_{i=1}^{n-1}\nu
_{i}\phi \left( \left\vert x_{i}-x_{n}\right\vert \right)  \notag
\end{eqnarray}%
holds.

In the special case that $n=2$ we get that%
\begin{eqnarray}
\left( 1-\nu \right) f\left( a\right) +\nu f\left( b\right) &\leq &f\left(
\left( 1-\nu \right) a+\nu b\right) -\left( 1-\nu \right) \phi \left(
\left\vert a-b\right\vert \right)  \label{2.2} \\
&&+\phi \left( \left( \nu -1\right) \left\vert a-b\right\vert \right) , 
\notag
\end{eqnarray}%
holds.

b. \ \ If $f$ is uniformly convex with modulus $\Phi $,\ the inequality 
\begin{eqnarray}
&&f\left( \sum_{i=1}^{n}\nu _{i}x_{i}\right)  \label{2.3} \\
&\geq &\sum_{i=1}^{n}\nu _{i}f\left( x_{i}\right) +\Phi \left( \left\vert
\sum_{i=1}^{n}\nu _{i}x_{i}-x_{n}\right\vert \right) -\sum_{i=1}^{n-1}\nu
_{i}\Phi \left( \left\vert x_{i}-x_{n}\right\vert \right)  \notag
\end{eqnarray}%
holds.

In the special case that $n=2$ we get that%
\begin{equation}
f\left( \nu _{1}x_{1}+\nu _{2}x_{2}\right) \geq \nu _{1}f\left( x_{1}\right)
+\nu _{2}f\left( x_{2}\right) -\frac{\nu _{1}}{\nu _{2}}\Phi \left( \nu
_{2}\left( \left\vert x_{2}-x_{1}\right\vert \right) \right)  \label{2.4}
\end{equation}%
holds.
\end{theorem}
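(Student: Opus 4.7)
The plan is to mimic the proof of Theorem \ref{Th4} from \cite{BPV}, the central trick being to rewrite $x_{n}$ as an ordinary convex combination involving the ``external'' point $X:=\sum_{i=1}^{n}\nu_{i}x_{i}$. Treating $f\bigl(\sum\nu_{i}x_{i}\bigr)$ as a Jensen-type combination forces the implicit normalization $\sum_{i=1}^{n}\nu_{i}=1$, so together with $\nu_{n}\geq 1$ and $\nu_{i}\leq 0$ for $i<n$ one obtains
\[
x_{n}=\frac{1}{\nu_{n}}\,X+\sum_{i=1}^{n-1}\frac{-\nu_{i}}{\nu_{n}}\,x_{i},
\]
with non-negative weights $\lambda_{0}=1/\nu_{n}$ and $\lambda_{i}=-\nu_{i}/\nu_{n}$ for $i<n$ that sum to one. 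This shifts the burden from the external combination on the left of (\ref{2.1}) and (\ref{2.3}) to a genuine convex combination producing $x_{n}$.

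To prove (\ref{2.1}) I would apply the discrete Jensen-type inequality of Remark \ref{Rem4} for $\phi$-convex functions to the above convex combination, obtaining
\[
f(x_{n})\leq \lambda_{0}\bigl(f(X)+\phi(|X-x_{n}|)\bigr)+\sum_{i=1}^{n-1}\lambda_{i}\bigl(f(x_{i})+\phi(|x_{i}-x_{n}|)\bigr).
\]
Multiplying by $\nu_{n}$, using $\nu_{n}\lambda_{0}=1$ and $\nu_{n}\lambda_{i}=-\nu_{i}$ for $i<n$, and solving for $f(X)$ yields (\ref{2.1}). Inequality (\ref{2.3}) is proved identically, substituting the Jensen inequality (\ref{1.3}) of Remark \ref{Rem3} for uniformly convex functions in place of Remark \ref{Rem4}; the only change is that each $+\phi$ becomes $-\Phi$. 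The $n=2$ case (\ref{2.2}) follows by setting $\nu_{1}=1-\nu$, $\nu_{2}=\nu\geq 1$, $x_{1}=a$, $x_{2}=b$ in (\ref{2.1}) and simplifying $|X-b|=(\nu-1)|a-b|$.

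Inequality (\ref{2.4}) is \emph{not} a specialization of (\ref{2.3}): for a general modulus $\Phi$ these are two different lower bounds on $f(X)$ that coincide only when $\Phi$ is quadratic. I would instead derive (\ref{2.4}) by applying the uniform convexity inequality of Definition \ref{Def2} directly to the binary combination $x_{2}=tX+(1-t)x_{1}$ with $t=1/\nu_{2}\in(0,1]$ and $1-t=-\nu_{1}/\nu_{2}\in[0,1)$, using $|X-x_{1}|=\nu_{2}|x_{2}-x_{1}|$, and then multiplying through by $\nu_{2}$ and rearranging.

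The main obstacle is purely bookkeeping: the weights $\nu_{i}$ with $i<n$ are non-positive, so one must be consistent about whether one writes $\nu_{i}$ or $|\nu_{i}|=-\nu_{i}$ inside the arguments of $\phi$ or $\Phi$, and repeated sign flips produce the specific combinations appearing on the right-hand sides of (\ref{2.1})--(\ref{2.4}). The only conceptual subtlety is noticing that (\ref{2.4}) must be proved by the direct binary route rather than as a consequence of (\ref{2.3}).
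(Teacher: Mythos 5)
Your proposal is correct and follows essentially the same route as the paper: the decomposition $x_{n}=\frac{1}{\nu_{n}}\sum_{i=1}^{n}\nu_{i}x_{i}+\sum_{i=1}^{n-1}\frac{-\nu_{i}}{\nu_{n}}x_{i}$ (with the implicit normalization $\sum_{i=1}^{n}\nu_{i}=1$), the Jensen-type inequalities of Remark \ref{Rem4} and Remark \ref{Rem3}, multiplication by $\nu_{n}$ and rearrangement. Your treatment of (\ref{2.4}) by applying Definition \ref{Def2} directly to the binary combination is exactly what the paper does in (\ref{2.7}), and your observation that (\ref{2.4}) is not a literal specialization of (\ref{2.3}) is a fair reading of the paper's wording.
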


\begin{proof}
a. \ \ From equality $f\left( x_{n}\right) =f\left( \frac{1}{\nu _{n}}\left(
\sum_{i=1}^{n}\nu _{i}x_{i}\right) +\sum_{i=1}^{n-1}\frac{-\nu _{i}}{\nu _{n}%
}x_{i}\right) $ as $\frac{1}{\nu _{n}}+\sum_{i=1}^{n-1}\frac{-\nu _{i}}{\nu
_{n}}=1$\ and from the $\phi $-convexity of $f$\ we get that 
\begin{eqnarray}
f\left( x_{n}\right) &=&f\left( \frac{1}{\nu _{n}}\left( \sum_{i=1}^{n}\nu
_{i}x_{i}\right) +\sum_{i=1}^{n-1}\frac{-\nu _{i}}{\nu _{n}}x_{i}\right)
\label{2.5} \\
&\leq &\frac{1}{\nu _{n}}f\left( \sum_{i=1}^{n}\nu _{i}x_{i}\right)
+\sum_{i=1}^{n-1}\frac{-\nu _{i}}{\nu _{n}}f\left( x_{i}\right)  \notag \\
&&-\frac{1}{\nu _{n}}\phi \left( \sum_{i=1}^{n}\left\vert \nu
_{i}x_{i}-x_{n}\right\vert \right) +\sum_{i=1}^{n-1}\frac{-\nu _{i}}{\nu _{n}%
}\phi \left( \left\vert x_{i}-x_{n}\right\vert \right) .  \notag
\end{eqnarray}%
From (\ref{2.5}) we get by multipying by $\nu _{n}$ that \ 
\begin{eqnarray}
\nu _{n}f\left( x_{n}\right) &\leq &f\left( \sum_{i=1}^{n}\nu
_{i}x_{i}\right) -\sum_{i=1}^{n-1}\nu _{i}f\left( x_{i}\right)  \label{2.6}
\\
&&+\phi \left( \sum_{i=1}^{n}\left\vert \nu _{i}x_{i}-x_{n}\right\vert
\right) -\sum_{i=1}^{n-1}\nu _{i}\phi \left( \left\vert
x_{i}-x_{n}\right\vert \right) ,  \notag
\end{eqnarray}%
which is the same as (\ref{2.1}). From (\ref{2.1}) when $n=2$ we get (\ref%
{2.2}).

b. \ \ The proof of (\ref{2.3}), for uniformly convex functions is similar
to the proof of (\ref{2.1}) for $\phi $-convex functions and therefore is
omitted.

To prove (\ref{2.4}) we use Definition \ref{Def2} of uniformly convex
functions with modulus $\Phi $\ and get when $\nu _{2}>1$ and $\nu _{1}<0$
that \ 
\begin{eqnarray}
f\left( x_{2}\right) &=&f\left( \frac{1}{\nu _{2}}\left( \nu _{1}x_{1}+\nu
_{2}x_{2}\right) +\left( \frac{-\nu _{1}}{\nu _{2}}\right) x_{1}\right)
\label{2.7} \\
&\leq &\left( \frac{1}{\nu _{2}}\right) f\left( \nu _{1}x_{1}+\nu
_{2}x_{2}\right) +\left( \frac{-\nu _{1}}{\nu _{2}}\right) f\left(
x_{1}\right)  \notag \\
&&+\frac{\nu _{1}}{\nu _{2}^{2}}\Phi \left( \nu _{2}\left( \left\vert
x_{2}-x_{1}\right\vert \right) \right) .  \notag
\end{eqnarray}%
Multipying this inequality by $\nu _{2}$ we get Inequality (\ref{2.4})

The proof is complete.
\end{proof}

\begin{example}
\label{Ex1} \ Let the function $f$ be \ 
\begin{equation*}
f\left( x\right) =2x^{n},\quad x\in \left[ a,a+1\right] ,\quad a\geq 1,\quad
n=2,3,...\text{.}
\end{equation*}%
Then $f$ is a uniformly convex function when its modulus $\Phi $ is: 
\begin{equation*}
\Phi =x^{n}\left( 3x-x^{3}\right) ,\quad x\in \left[ 0,1\right] \text{.}
\end{equation*}

Indeed, the function $g\left( x\right) =2x^{n}$ where $n=2,3,...$ is
uniformly convex (see \cite{A}), that is,%
\begin{eqnarray}
&&t2x^{n}+\left( 1-t\right) 2y^{n}-2\left( tx+\left( 1-t\right) y\right) ^{n}
\label{2.8} \\
&\geq &t\left( 1-t\right) 2\left\vert x-y\right\vert ^{n}.  \notag
\end{eqnarray}%
A simple computation shows that $x^{n}\left( 3x-x^{3}\right) \leq 2x^{n}$.
Hence, from (\ref{2.8}) we get that 
\begin{eqnarray*}
&&t2x^{n}+\left( 1-t\right) 2y^{n}-2\left( tx+\left( 1-t\right) y\right) ^{n}
\\
&\geq &t\left( 1-t\right) 2\left\vert x-y\right\vert ^{n} \\
&\geq &t\left( 1-t\right) \left\vert x-y\right\vert ^{n}\left( 3\left\vert
x-y\right\vert -\left\vert x-y\right\vert ^{3}\right) ,
\end{eqnarray*}%
that is, $f\left( x\right) =2x^{n}$ is a uniformly convex function with
modulus $\Phi \left( x\right) =x^{n}\left( 3x-x^{3}\right) $ as claimed.

As the function $f:\left[ a,a+1\right] \rightarrow 
\mathbb{R}
$\ is uniformly convex where $\Phi :\left[ 0,1\right] \rightarrow 
\mathbb{R}
_{+}$, it satisfies the inequality (\ref{2.7}) for $\nu >1$ and $\left(
1-\nu \right) a+\nu b\in \left[ a,a+1\right] ,$ $a\geq 1$,\ that is, the
Inequality (\ref{2.4}) holds.
\end{example}

The Hermite-Hadamard inequality versions for uniformly convex functions and
for $\phi $-convex functions are as follows:\ 

\begin{theorem}
\label{Th11} Let $f:\left[ a,b\right] \rightarrow 
\mathbb{R}
$ be an integrable $\phi $-convex function with error $\phi :\left[ 0,b-a%
\right] \rightarrow 
\mathbb{R}
_{+}$. Then%
\begin{eqnarray}
&&f\left( \frac{a+b}{2}\right) -\frac{1}{b-a}\int_{a}^{b}\phi \left(
\left\vert t-\frac{a+b}{2}\right\vert \right) dt  \label{2.9} \\
&\leq &\frac{1}{b-a}\int_{a}^{b}f\left( t\right) dt  \notag
\end{eqnarray}%
and%
\begin{equation}
\frac{1}{b-a}\int_{a}^{b}f\left( t\right) dt\leq \frac{f\left( a\right)
+f\left( b\right) }{2}\qquad \qquad \qquad \qquad \qquad  \label{2.10}
\end{equation}%
\begin{equation*}
+\frac{1}{\left( b-a\right) ^{2}}\int_{a}^{b}\left[ \left( b-t\right) \phi
\left( t-a\right) +\left( t-a\right) \phi \left( b-t\right) \right] dt.
\end{equation*}%
In case that $f:\left[ a,b\right] \rightarrow 
\mathbb{R}
$ is an integrable uniformly convex function with modulus $\Phi :\left[ 0,b-a%
\right] \rightarrow 
\mathbb{R}
_{+}$. Then%
\begin{eqnarray}
&&f\left( \frac{a+b}{2}\right) +\frac{1}{b-a}\int_{a}^{b}\Phi \left(
\left\vert t-\frac{a+b}{2}\right\vert \right) dt  \label{2.11} \\
&\leq &\frac{1}{b-a}\int_{a}^{b}f\left( t\right) dt  \notag
\end{eqnarray}%
and%
\begin{equation}
\frac{1}{b-a}\int_{a}^{b}f\left( t\right) dt\leq \frac{f\left( a\right)
+f\left( b\right) }{2}-\frac{1}{6}\Phi \left( \left\vert b-a\right\vert
\right)  \label{2.12}
\end{equation}
\end{theorem}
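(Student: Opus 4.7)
The plan is to split the theorem into four separate inequalities and handle the left-hand inequalities (\ref{2.9}) and (\ref{2.11}) by direct substitution into the integral Jensen-type inequalities already available, and handle the right-hand inequalities (\ref{2.10}) and (\ref{2.12}) by integrating the pointwise defining inequality of the respective convexity notion over the parameter $t\in[0,1]$.

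For (\ref{2.9}), I would apply the integral inequality from Remark \ref{Rem4} with $g(s)=s$ and $d\mu(s)=\frac{ds}{b-a}$, the normalised Lebesgue measure on $[a,b]$. Then $\int g\,d\mu=\frac{a+b}{2}$ and the inequality
$$f\Bigl(\tfrac{a+b}{2}\Bigr)\leq\frac{1}{b-a}\int_a^b f(t)\,dt+\frac{1}{b-a}\int_a^b \phi\Bigl(\bigl|t-\tfrac{a+b}{2}\bigr|\Bigr)\,dt$$
is exactly (\ref{2.9}) after transposing the $\phi$-integral. For (\ref{2.11}), the same substitution into (\ref{1.4}) from Remark \ref{Rem3} gives the analogous inequality, with the sign on the $\Phi$-term reversed in the correct direction.

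For (\ref{2.10}), I would take the defining inequality of $\phi$-convexity (Definition \ref{Def3}) with $x=a$, $y=b$:
$$tf(a)+(1-t)f(b)+t\phi\bigl((1-t)(b-a)\bigr)+(1-t)\phi\bigl(t(b-a)\bigr)\geq f\bigl(ta+(1-t)b\bigr),$$
and integrate over $t\in[0,1]$. The linear part integrates to $\frac{f(a)+f(b)}{2}$. The right-hand side, via the substitution $s=ta+(1-t)b$, becomes $\frac{1}{b-a}\int_a^b f(s)\,ds$. For the two $\phi$-terms, I would substitute $u=(1-t)(b-a)$ in the first and $u=t(b-a)$ in the second, obtaining in each case a multiple of $\int_0^{b-a}(b-a-u)\phi(u)\,du$; reverting to the variable $t\in[a,b]$ by $u=t-a$ (respectively $u=b-t$) yields precisely the two pieces $\frac{1}{(b-a)^2}\int_a^b(b-t)\phi(t-a)\,dt$ and $\frac{1}{(b-a)^2}\int_a^b(t-a)\phi(b-t)\,dt$ appearing in (\ref{2.10}). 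For (\ref{2.12}), the argument is shorter: integrate the uniform-convexity inequality of Definition \ref{Def2} with $x=a$, $y=b$ over $t\in[0,1]$, use the same linear-part identity, and compute $\int_0^1 t(1-t)\,dt=\tfrac{1}{6}$ to obtain the $-\tfrac{1}{6}\Phi(|b-a|)$ correction.

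The only nontrivial bookkeeping is the pair of substitutions needed to rewrite $\int_0^1 t\phi((1-t)(b-a))\,dt$ and $\int_0^1(1-t)\phi(t(b-a))\,dt$ in the symmetric form of (\ref{2.10}); this is the main (though purely computational) obstacle, and it relies only on the symmetry $u\leftrightarrow(b-a)-u$ of the underlying integral. Everything else is a direct specialisation of results already stated in the excerpt (Remarks \ref{Rem3} and \ref{Rem4}, Definitions \ref{Def2} and \ref{Def3}).
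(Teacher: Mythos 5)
Your proposal is correct and follows essentially the same route as the paper: the left-hand inequalities (\ref{2.9}) and (\ref{2.11}) come from the integral Jensen-type inequalities of Remarks \ref{Rem4} and \ref{Rem3} with $g(s)=s$ and $d\mu=\frac{ds}{b-a}$, and the right-hand inequalities (\ref{2.10}) and (\ref{2.12}) come from integrating the defining inequalities of Definitions \ref{Def3} and \ref{Def2} at $x=a$, $y=b$. The only cosmetic difference is that you integrate over the parameter $t\in[0,1]$ and then change variables, whereas the paper first substitutes $t\mapsto\frac{b-t}{b-a}$ and integrates directly over $[a,b]$; the computations are identical.
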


\begin{proof}
First we prove (\ref{2.9}) for $\phi $-convex functions. From the Corollary
of Definition \ref{Def3} for $f\left( t\right) =t$ and measure $\mu $\ on $%
\Omega =\left[ a,b\right] $, defined by $d\mu =\frac{1}{b-a}dt$, Inequality %
\ref{2.9} holds.

To get (\ref{2.10}) we use Definition \ref{Def3} for $t\rightarrow \frac{b-t%
}{b-a}$, $x=a,$ $y=b$, that is:%
\begin{equation*}
f\left( t\right) \leq \frac{b-t}{b-a}\left( f\left( a\right) +\phi \left(
t-a\right) \right) +\frac{t-a}{b-a}\left( f\left( b\right) +\phi \left(
b-t\right) \right)
\end{equation*}%
for all $t,$ such that $a<t<b$. After integrating this expression\ over the
segment $\left[ a,b\right] $:%
\begin{eqnarray*}
\int_{a}^{b}f\left( t\right) dt &\leq &\frac{1}{b-a}\int_{a}^{b}\left[
bf\left( a\right) -af\left( b\right) +t\left( f\left( b\right) -f\left(
a\right) \right) \right] dt \\
&&+\frac{1}{b-a}\int_{a}^{b}\left[ \left( b-t\right) \phi \left( t-a\right)
+\left( t-a\right) \phi \left( b-t\right) \right]
\end{eqnarray*}%
and dividing with $\left( b-a\right) $ we get (\ref{2.10}).

\ The proof Inequality (\ref{2.11}) for a uniformly convex function with a
modulus $\Phi $\ is similar to the proof of the Inequality (\ref{2.9}) for $%
\phi $-convex functions and therefore is omitted.

To prove (\ref{2.12}) we use Definition \ref{Def2} and get 
\begin{equation*}
f\left( t\right) \leq \frac{b-t}{b-a}f\left( a\right) +\frac{t-a}{b-a}%
f\left( b\right) -\left( \frac{b-t}{b-a}\right) \left( \frac{t-a}{b-a}%
\right) \Phi \left( \left\vert b-a\right\vert \right)
\end{equation*}%
for all $t,$ such that $a<t<b$. After integrating this expression\ over the
segment $\left[ a,b\right] $ we get Inequality (\ref{2.12}).

The proof is complete.
\end{proof}

\section{\textbf{Upper bounds for deviations from a Mean Value}}

\begin{theorem}
\label{Th12} Let $f$ be an uniformly convex function with modulus $\Phi
\left( x\right) =mx^{p},$ $p\geq 2,$ $m>0$,$\ $ that is, $f$ is a strongly
convex function. Let $n>1$ be an integer and $x_{1},x_{2},...,x_{n}$ be
positive real numbers. Denote \ $a=\sum_{i=1}^{n}\alpha _{i}x_{i}$ \ and \ $%
c=\sum_{i=1}^{n}\alpha _{i}f\left( x_{i}\right) ,$ \ where \ $0<\alpha
_{i}<1,$ $\ i=1,...,n\ \ \ \sum \alpha _{i}=1,$ \ $,$ then \ \ 
\begin{equation}
\ \underset{1\leq k\leq n}{\max }\left\{ \left\vert x_{k}-a\right\vert
\right\} \leq T\left( c-f\left( a\right) \right) ^{\frac{1}{p}}  \label{3.1}
\end{equation}%
where 
\begin{equation*}
T=\frac{\left( 1-\alpha _{0}\right) ^{1-\frac{1}{p}}}{\alpha _{0}^{\frac{1}{p%
}}\left( \alpha _{0}^{p-1}+\left( 1-\alpha _{0}\right) ^{p-1}\right) ^{\frac{%
1}{p}}},\qquad \alpha _{0}=\underset{1\leq k\leq n}{\min }\left( \alpha
_{k}\right) .
\end{equation*}
\end{theorem}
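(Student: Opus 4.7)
The plan is to mirror the proof of Theorem \ref{Th8}, replacing the superquadratic Jensen inequality by its uniformly convex counterpart (\ref{1.3}) from Remark \ref{Rem3}. First I would apply (\ref{1.3}) with weights $\lambda _{i}=\alpha _{i}$, points $x_{i}$, and modulus $\Phi (t)=mt^{p}$. Since $\sum _{i}\alpha _{i}x_{i}=a$, this yields
\[
f(a)\leq \sum_{i=1}^{n}\alpha _{i}f(x_{i})-m\sum_{i=1}^{n}\alpha _{i}|x_{i}-a|^{p}=c-m\sum_{i=1}^{n}\alpha _{i}|x_{i}-a|^{p},
\]
so that $\sum _{i}\alpha _{i}|x_{i}-a|^{p}\leq (c-f(a))/m$. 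This reduces the problem to bounding $M:=\max _{k}|x_{k}-a|$ from above in terms of the weighted $p$-th power mean of the deviations $|x_{i}-a|$.

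Let $k_{0}$ be an index attaining $M$. The centering identity $\sum _{i}\alpha _{i}(x_{i}-a)=0$ rewrites as $\sum _{i\neq k_{0}}\alpha _{i}(x_{i}-a)=-\alpha _{k_{0}}(x_{k_{0}}-a)$, so the triangle inequality gives $\sum _{i\neq k_{0}}\alpha _{i}|x_{i}-a|\geq \alpha _{k_{0}}M$. Applying Jensen's inequality to the convex map $t\mapsto t^{p}$ with the renormalized weights $\alpha _{i}/(1-\alpha _{k_{0}})$ over $i\neq k_{0}$ then produces
\[
\sum_{i\neq k_{0}}\alpha _{i}|x_{i}-a|^{p}\geq (1-\alpha _{k_{0}})^{1-p}(\alpha _{k_{0}}M)^{p}.
\]
Adding the contribution $\alpha _{k_{0}}M^{p}$ from $i=k_{0}$ and factoring yields
\[
\sum_{i=1}^{n}\alpha _{i}|x_{i}-a|^{p}\geq \frac{\alpha _{k_{0}}\bigl(\alpha _{k_{0}}^{p-1}+(1-\alpha _{k_{0}})^{p-1}\bigr)}{(1-\alpha _{k_{0}})^{p-1}}\,M^{p}.
\]

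The one remaining step, and the only place where care is needed, is to replace $\alpha _{k_{0}}$ by $\alpha _{0}=\min _{k}\alpha _{k}$. This amounts to showing that $h(\alpha )=\alpha +\alpha ^{p}/(1-\alpha )^{p-1}$ is increasing on $(0,1)$ for $p\geq 2$, so that its reciprocal, which is the prefactor of $M^{p}$ above, is decreasing in $\alpha _{k_{0}}$. A direct computation gives $h^{\prime }(\alpha )=1+\alpha ^{p-1}(p-\alpha )/(1-\alpha )^{p}>0$, which settles the monotonicity. Since $\alpha _{k_{0}}\geq \alpha _{0}$, the prefactor is at least $T^{-p}$; combining this with the bound $\sum _{i}\alpha _{i}|x_{i}-a|^{p}\leq (c-f(a))/m$ from the first paragraph and extracting a $p$-th root yields (\ref{3.1}).
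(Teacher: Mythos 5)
Your proof is correct and follows essentially the same route as the paper's: the uniformly convex Jensen inequality (\ref{1.3}) applied with weights $\alpha_i$, the centering identity $\sum_i\alpha_i(x_i-a)=0$ combined with a power-mean/H\"{o}lder estimate that isolates the maximal deviation, and the monotonicity in $\alpha$ of the resulting prefactor $h(\alpha)=\alpha+\alpha^{p}/(1-\alpha)^{p-1}$ (which you, unlike the paper, verify explicitly). The only remark worth making is that by keeping the constant $m$ you actually obtain $\max_k|x_k-a|\leq T\left( (c-f(a))/m\right)^{1/p}$, which agrees with (\ref{3.1}) exactly when $m=1$; the paper's own proof silently sets $\Phi(x)=x^{p}$, so this is a discrepancy in the statement rather than in your argument.
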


\begin{example}
\label{Ex2} Let $\varphi \left( x\right) ,$ $\geq 0$ be a convex function
satisfying $\varphi ^{^{\prime }}\left( 0\right) >0$. It is easy to verify
that the function $g\left( x\right) =x\varphi \left( x\right) $ satisfyes
the iequality 
\begin{equation*}
\sum_{i=1}^{n}\alpha _{i}g\left( x_{i}\right) -g\left( \sum_{j=1}^{n}\alpha
_{j}x_{j}\right) \geq \varphi ^{^{\prime }}\left( 0\right) \left(
\sum_{i=1}^{n}\alpha _{i}\left( x_{i}-\sum_{j=1}^{n}\alpha _{j}x_{j}\right)
^{2}\right) .
\end{equation*}%
Denoting $f\left( x\right) =\frac{g\left( x\right) }{\varphi ^{^{\prime
}}\left( 0\right) }$ we get that\ $f\left( x\right) $\ is strongly convex,
and therefore uniformly convex satisfy Theorem \ref{Th12}.

If \ $\varphi ^{^{\prime }}\left( 0\right) <0$ then $g$ is $\phi $-convex
when $\phi =-\varphi ^{^{\prime }}(0)x^{2}$. \ 
\end{example}

\begin{proof}
(of Theorem \ref{Th12}) \ $\Phi \left( x\right) =x^{p}$ \ $p\geq 2$,
therefore applying Remark \ref{Rem3}%
\begin{equation}
\sum_{i=1}^{n}\alpha _{i}f\left( x_{i}\right) -f\left( \sum_{k=1}^{n}\alpha
_{k}x_{k}\right) \geq \sum_{i=1}^{n}\alpha _{i}\left( \left\vert
x_{i}-a\right\vert \right) ^{p}  \label{3.2}
\end{equation}%
holds.

Denote $y_{i}=x_{i}-a,$ \ $i=1,...,n.$ \ Then from $\sum_{i=1}^{n}\alpha
_{i}y_{i}=0$ \ we get from H\"{o}lder's inequality 
\begin{eqnarray*}
\left( \alpha _{n}\left\vert y_{n}\right\vert \right) ^{p} &=&\left(
\left\vert \sum_{i=1}^{n-1}\alpha _{i}y_{i}\right\vert \right) ^{p}\leq
\left( \sum_{i=1}^{n-1}\alpha _{i}\left\vert y_{i}\right\vert \right) ^{p} \\
&=&\left( \sum_{i=1}^{n-1}\alpha _{i}^{1-\frac{1}{p}}\left( \alpha
_{i}\left\vert y_{i}\right\vert ^{p}\right) ^{\frac{1}{p}}\right) ^{p}\leq
\left( \sum_{i=1}^{n-1}\alpha _{i}\right) ^{p-1}\sum_{i=1}^{n-1}\alpha
_{i}\left\vert y_{i}\right\vert ^{p},
\end{eqnarray*}%
and 
\begin{equation*}
\left( \alpha _{n}\left\vert y_{n}\right\vert \right) ^{p}\leq \left(
1-\alpha _{n}\right) ^{p-1}\left( \sum_{i=1}^{n}\alpha _{i}\left\vert
y_{i}\right\vert ^{p}-\alpha _{n}\left\vert y_{n}\right\vert ^{p}\right) .
\end{equation*}%
Therefore from (\ref{3.2}) 
\begin{equation*}
\alpha _{n}\left( \alpha _{n}^{p-1}+\left( 1-\alpha _{n}\right)
^{p-1}\right) \left\vert y_{n}\right\vert ^{p}\leq \left( 1-\alpha
_{n}\right) ^{p-1}\left( c-f\left( a\right) \right)
\end{equation*}%
which by taking into consideration that $\frac{\left( 1-\alpha \right) ^{p-1}%
}{\alpha \left( \alpha ^{p-1}+\left( 1-\alpha \right) ^{p-1}\right) }$\ is
decreasing for $0<\alpha <1$ leads to (\ref{3.1}).
\end{proof}

\begin{theorem}
\label{Th13} Let $f$ $\in 
\mathbb{R}
$ be an uniformly convex function with modulus $\Phi $, which satisfies \ $%
\Phi \left( AB\right) \leq \Phi \left( A\right) \Phi \left( B\right) $ \ for 
$A>0$, $B>0$.

Let $x_{1},x_{2},...,x_{n}$ be positive real numbers. Denote $%
a=\sum_{i=1}^{n}\frac{x_{i}}{n}$ \ and \ $d=\frac{1}{n}\sum_{i=1}^{n}f\left(
x_{i}\right) $. If $\Phi $ is convex\ then 
\begin{equation}
\underset{1\leq k\leq n}{\max }\left( \Phi \left( \left\vert
x_{k}-a\right\vert \right) \right) \leq \frac{\Phi \left( n-1\right) n}{%
n-1+\Phi \left( n-1\right) }\left( d-f\left( a\right) \right) .  \label{3.3}
\end{equation}
\end{theorem}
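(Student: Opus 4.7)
The plan is to mirror the proof of Theorem \ref{Th9}, replacing the intrinsic superquadratic inequality (\ref{1.2}) by the uniform-convexity inequality (\ref{1.3}) from Remark \ref{Rem3}, and then exploiting the two hypotheses on $\Phi$ (convexity and submultiplicativity) to extract $\Phi(|x_n-a|)$ from the averaged sum. By Remark \ref{Rem3} we may assume $\Phi$ is increasing with $\Phi(0)=0$, which is needed to iterate monotonicity in the intermediate estimates.

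First, apply (\ref{1.3}) with $\lambda_i=1/n$ to obtain
\begin{equation*}
f(a)\leq d-\frac{1}{n}\sum_{i=1}^{n}\Phi(|x_{i}-a|),\qquad\text{i.e.}\qquad \sum_{i=1}^{n}\Phi(|x_{i}-a|)\leq n(d-f(a)).
\end{equation*}
Next, without loss of generality assume the maximum in (\ref{3.3}) is attained at the index $k=n$, and let $M=|x_{n}-a|$. Since $\sum_{i=1}^{n}(x_{i}-a)=0$, the triangle inequality gives
\begin{equation*}
M=\Bigl|\sum_{i=1}^{n-1}(x_{i}-a)\Bigr|\leq \sum_{i=1}^{n-1}|x_{i}-a|,
\end{equation*}
so $\dfrac{M}{n-1}\leq \dfrac{1}{n-1}\sum_{i=1}^{n-1}|x_{i}-a|$.

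Now the two ingredients on $\Phi$ come in. Monotonicity of $\Phi$ together with its convexity (Jensen's inequality on the right-hand side) yields
\begin{equation*}
\Phi\!\left(\tfrac{M}{n-1}\right)\leq \Phi\!\left(\tfrac{1}{n-1}\sum_{i=1}^{n-1}|x_{i}-a|\right)\leq \frac{1}{n-1}\sum_{i=1}^{n-1}\Phi(|x_{i}-a|),
\end{equation*}
while the submultiplicativity $\Phi(AB)\leq \Phi(A)\Phi(B)$ applied to $A=n-1$, $B=M/(n-1)$ gives $\Phi(M)\leq \Phi(n-1)\,\Phi(M/(n-1))$. Combining these two, and then adding $\Phi(M)$ to both sides of the resulting inequality to rebuild the full sum $\sum_{i=1}^{n}\Phi(|x_{i}-a|)$, produces
\begin{equation*}
\frac{n-1+\Phi(n-1)}{\Phi(n-1)}\,\Phi(M)\leq \sum_{i=1}^{n}\Phi(|x_{i}-a|)\leq n(d-f(a)),
\end{equation*}
which upon rearrangement is exactly (\ref{3.3}).

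The proof is essentially routine once the correct Jensen-type inequality (\ref{1.3}) is identified; the only delicate point is the order of the two steps on $\Phi$. One must first descend from $M$ to $M/(n-1)$ using submultiplicativity, and only then apply convexity, because reversing the order would leave a factor $\Phi((n-1)^{-1})$ that cannot be controlled. Handling the convention $\Phi(0)=0$ and the monotone choice of modulus (guaranteed by Remark \ref{Rem3}) is the small housekeeping step that makes every inequality above legitimate.
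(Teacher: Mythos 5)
Your proof is correct, and its skeleton is the same as the paper's: apply the Jensen-type inequality (\ref{1.3}) with equal weights $\lambda_i=\tfrac1n$ to get $\sum_{i=1}^{n}\Phi(|x_i-a|)\le n\,(d-f(a))$, use $\sum_i(x_i-a)=0$ and the triangle inequality at the maximizing index, bound $\Phi(|x_n-a|)$ by $\tfrac{\Phi(n-1)}{n-1}\sum_{i=1}^{n-1}\Phi(|x_i-a|)$, and rearrange. The only genuine difference is the middle step. The paper reaches that intermediate bound by writing $\sum_{i=1}^{n-1}|y_i|=(n-1)\sum_{i=1}^{n-1}\tfrac{\Phi^{-1}(\Phi(|y_i|))}{n-1}$, invoking concavity of $\Phi^{-1}$ together with monotonicity of $\Phi$, and only then applying submultiplicativity to the product $(n-1)\,\Phi^{-1}(\cdot)$; you instead factor $M=(n-1)\cdot\tfrac{M}{n-1}$, apply $\Phi(AB)\le\Phi(A)\Phi(B)$ first, and then use convexity of $\Phi$ directly via discrete Jensen on the average $\tfrac{1}{n-1}\sum_{i=1}^{n-1}|x_i-a|$. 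Since concavity of $\Phi^{-1}$ for an increasing $\Phi$ is just convexity of $\Phi$ in disguise, the two routes use the same ingredients, but yours is slightly cleaner: it never needs $\Phi$ to be invertible, which the paper assumes implicitly when it writes $\Phi^{-1}$. One small quibble: your closing remark that reversing the two steps on $\Phi$ would fail is not really accurate --- one can equally well go up to $\Phi\bigl(\sum_{i=1}^{n-1}|x_i-a|\bigr)$ by monotonicity first and then apply submultiplicativity with $A=n-1$ and $B$ the average, which is essentially what the paper does; no uncontrollable factor $\Phi\bigl((n-1)^{-1}\bigr)$ arises.
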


\begin{proof}
\ The function $f$ \ is uniformly convex with $\Phi $ defined on $\left[
0,\infty \right) ,$ therefore from Remark \ref{Rem3} for $\lambda _{i}=\frac{%
1}{n},$ \ $i=1,...,n$%
\begin{equation}
\sum_{i=1}^{n}\frac{f\left( x_{i}\right) }{n}-f\left( \frac{%
\sum_{i=1}^{n}x_{i}}{n}\right) \geq \frac{1}{n}\sum_{i=1}^{n}\Phi \left(
\left\vert x_{i}-\frac{\sum_{i=1}^{n}x_{i}}{n}\right\vert \right)
\label{3.4}
\end{equation}%
holds.

In other words for $y_{i}=x_{i}-a,$ $i=1,...,n,$ 
\begin{equation}
\frac{1}{n}\sum_{i=1}^{n}\Phi \left( \left\vert y_{i}\right\vert \right)
\leq d-f\left( a\right)  \label{3.5}
\end{equation}%
holds.

From \ $\sum_{i=1}^{n}y_{i}=0$ \ we get that $\ \left\vert y_{n}\right\vert
=\left\vert -\sum_{i=1}^{n-1}y_{i}\right\vert .$ As $\Phi $ is positive,
according to Definition \ref{Def2}, $f$ is convex and also increasing, and
as it is given that $\Phi $\ is convex too and according to Remark \ref{Rem3}%
, $\Phi $ is increasing and $\Phi \left( 0\right) =0$, therefore, \ \ 
\begin{eqnarray}
\Phi \left( \left\vert y_{n}\right\vert \right) &=&\Phi \left( \left\vert
-\sum_{i=1}^{n-1}y_{i}\right\vert \right) \leq \Phi \left(
\sum_{i=1}^{n-1}\left\vert y_{i}\right\vert \right)  \label{3.6} \\
&=&\Phi \left( \left( n-1\right) \sum_{i=1}^{n-1}\frac{\Phi ^{-1}\left( \Phi
\left( \left\vert y_{i}\right\vert \right) \right) }{n-1}\right)  \notag \\
&\leq &\Phi \left( \left( n-1\right) \Phi ^{-1}\left( \frac{%
\sum_{i=1}^{n-1}\Phi \left( \left\vert y_{i}\right\vert \right) }{n-1}%
\right) \right) .  \notag
\end{eqnarray}%
Indeed, the left side inequality is because $\Phi $ is increasing and the
right side inequality follows because $\Phi ^{-1}$ is concave and $\Phi $ $\ 
$is increasing.

As $\Phi $ satisfies also $\Phi \left( AB\right) \leq \Phi \left( A\right)
\Phi \left( B\right) $ we get that 
\begin{equation}
\Phi \left( \left( n-1\right) \Phi ^{-1}\left( \frac{\sum_{i=1}^{n-1}\Phi
\left( \left\vert y_{i}\right\vert \right) }{n-1}\right) \right) \leq \frac{%
\Phi \left( n-1\right) }{n-1}\sum_{i=1}^{n-1}\Phi \left( \left\vert
y_{i}\right\vert \right)  \label{3.7}
\end{equation}%
and from (\ref{3.5}), (\ref{3.6}) and (\ref{3.7}) we obtain 
\begin{equation*}
\Phi \left( \left\vert y_{n}\right\vert \right) \leq \frac{\Phi \left(
n-1\right) }{n-1}\left( \sum_{i=1}^{n-1}\Phi \left( \left\vert
y_{i}\right\vert \right) -\Phi \left( \left\vert y_{n}\right\vert \right)
\right) .
\end{equation*}%
From the last inequality as $\Phi (x)$ is positive and\ increasing, together
with (\ref{3.5}) 
\begin{equation*}
\frac{n-1+\Phi \left( n-1\right) }{n-1}\Phi \left( \left\vert
y_{n}\right\vert \right) \leq \frac{\Phi \left( n-1\right) }{n-1}%
\sum_{i=1}^{n}\Phi \left( \left\vert y_{i}\right\vert \right) \leq \frac{%
\Phi \left( n-1\right) }{n-1}n\left( d-f\left( a\right) \right)
\end{equation*}%
holds, which is equivalent to (\ref{1.10}).
\end{proof}

\section{\textbf{Examples: Relations between superquadracity and some other
types of convex functions}}

In addition to the examples which appear in remarks \ref{Rem1}, \ref{Rem2}
and \ref{Rem5} and in examples \ref{Ex1} and \ref{Ex2}, the following
examples emphasize the relations between superquadracity and other
extensions of convexity.

\begin{example}
\label{Ex3} Let $f\left( x\right) =x^{2}\ln x$, $x\geq 0$. This function is
superquadratic (see \cite{AJS}) and negative on $\left[ 0,1\right] $. Hence
Definition \ref{Def3} is satisfied, and \ $f$ on $I=\left[ a,a+1\right] ,$ $%
a\geq 0$, is $\phi $-convex, where $\phi =-f$ is defined on $\left[ 0,1%
\right] $.

Moreover, $f$ according to Theorem \ref{Th2} possesses the $-\Gamma $
property as all superquadratic functions do, therefore $\phi $ possesses the
property $\Gamma $\ and as $\phi \left( 0\right) =0$\ then $f\in
E_{0}^{\Gamma }\left( I\right) $.

Also, according to Theorem \ref{Th3}, $\sqrt{\phi \left( x\right) }=x\sqrt{%
-\ln x}$ is subadditive. As this function is increasing on $\left( 0,e^{-1}%
\right] ,$ we get according to Lemma \ref{Lem2} that the function $-x\sqrt{%
-\ln x}$\ is superquadratic on $\left[ 0,e^{-1}\right] $ and according to
the definition of $\phi $-convexity, it is also $\phi $-convex\ where $\phi
=x\sqrt{-\ln x}$ on $\left[ 0,e^{-1}\right] $.
\end{example}

\begin{example}
\label{Ex4} From Definition \ref{Def1} and Definition \ref{Def3} it is
obvious that when a superquadratic function $f$ is negative, the function $f$
is $\phi $-convex where $\phi =-f$.

Therefore, according to \cite[Example 4.2]{AJS} the set of functions 
\begin{equation*}
f_{p}\left( x\right) =-\left( 1+x^{p}\right) ^{\frac{1}{p}},\quad p>0,
\end{equation*}%
are superquadratic and negative. Hence, these functions are also $\phi $%
-convex where $\phi =-f$.

The same holds for the function $f\left( x\right) =-x^{p}$, $0\leq p\leq 2$, 
$x\geq 0$ are $\phi $-convex where $\phi =-f\left( x\right) $.\ 
\end{example}

We get easily the following results:

\begin{example}
\bigskip \label{Ex5} Let $f\left( x\right) =-x^{p}$, $x\geq 1$, $1<p\leq 2$.
Then $f$ is $\phi $-convex where $\phi \left( x\right) =x^{q}$, $x\in \left[
0,1\right] $ and $1<q\leq p\leq 2$.
\end{example}

\begin{example}
\label{Ex6} The function 
\begin{equation*}
f\left( x\right) =\frac{1}{2}\ln \left( 1+x^{2}\right) -x\arctan \left(
x\right) ,\quad f\left( 0\right) =0
\end{equation*}

satisfies 
\begin{equation*}
f^{^{\prime }}\left( x\right) =-\arctan \left( x\right) \quad f^{^{\prime
}}\left( 0\right) =0
\end{equation*}%
and is superquadratic because $\left( \frac{f^{^{\prime }}\left( x\right) }{x%
}\right) ^{^{\prime }}>0$ (see \cite{AJS}).

Therefore, this function is superquadratic and because it is negative on $%
\left[ 0,\infty \right) $ it is also $\phi $-convex for $\phi =-f$.
\end{example}

\begin{example}
\label{Ex7} The function 
\begin{eqnarray*}
f\left( x\right) &=&\int_{0}^{x}\frac{t\left( t-2\right) }{\sqrt{t^{2}+1}}dt
\\
&=&\frac{1}{2}x\sqrt{x^{2}+1}-2\sqrt{x^{2}+1}-\frac{1}{2}\ln \left( x+\sqrt{%
x^{2}+1}\right) +2,\quad x\geq 0
\end{eqnarray*}%
is superquadratic, (see \cite{ABM}) and $f\left( x\right) \leq 0,$ $x\in %
\left[ 0,T\right] ,$ where $T$ satisfies $2<T<3$\ and $f\left( x\right) \geq
0$, $x\geq T$. Therefore, on $\left[ a,a+1\right] $, $a\geq 0$ is $\phi $%
-convex where $\phi =-f,$ $0\leq x\leq T$.
\end{example}

Another set of functions $f$ which are either $\phi $-convex or strongly
convex are:

\begin{example}
\label{Ex8} Let $f\left( x\right) =x\varphi \left( x\right) ,$ $x\geq 0$,
where\ $\varphi $ is convex. It is easy to verify that 
\begin{equation*}
tf\ \left( x\right) +\left( 1-t\right) f\left( y\right) \geq f\left(
tx+\left( 1-t\right) y\right) +t\left( 1-t\right) \varphi ^{^{\prime
}}\left( 0\right) \left( x-y\right) ^{2}
\end{equation*}%
holds. Therefore if $\varphi ^{^{\prime }}\left( 0\right) <0$\ \ it means
that $f:%
\mathbb{R}
_{+}\rightarrow 
\mathbb{R}
$ is $\phi $-convex, $\phi =-\varphi ^{^{\prime }}\left( 0\right) x^{2}$, $%
x\geq 0$.

If $\varphi ^{^{\prime }}\left( 0\right) >0$, it means that $f$ is strongly
convex and uniformly convex where $\Phi =\varphi ^{^{\prime }}\left(
0\right) x^{2}$ as explained in Example \ref{Ex2}.

In particular, the function $f\left( x\right) =x\left( x-1\right)
^{2n}=x\varphi \left( x\right) ,$ $x\geq 0$\ where the function $\varphi
\left( x\right) =\left( x-1\right) ^{2n}$, $n=1,2,...$\ is convex and
satisfies $\varphi ^{^{\prime }}\left( 0\right) =-2n$. Therefore, $f$ is $%
\phi $-convex where $\phi =2nx^{2}$.

In the special case that the function $f\left( x\right) =x\left( x-1\right)
^{2n+1}=x\varphi \left( x\right) ,$ $x\geq 0$ where the function $\varphi
\left( x\right) =\left( x-1\right) ^{2n+1}$, $n=1,2,...$\ is convex and
satisfies $\varphi ^{^{\prime }}\left( 0\right) =2n+1$. Therefore, $f$ is
strongly convex where $\phi =\left( 2n+1\right) x^{2}$.
\end{example}

\bigskip

\end{document}